\newlength{\standardunitlength}
\newtheorem{prop}{Proposition}[section]
\newtheorem{lemma}[prop]{Lemma}
\newtheorem{theorem}[prop]{Theorem}
\begin{document}

\title [Cohen-Lenstra heuristics and random matrix theory] {Cohen-Lenstra heuristics and random matrix theory over finite fields}

\author{Jason Fulman}
\address{Department of Mathematics\\
        University of Southern California\\
        Los Angeles, CA, 90089}
\email{fulman@usc.edu}

\keywords{random matrix, random partition, Cohen-Lenstra heuristic}

\thanks{{\it 2010 AMS Subject Classification}: 15B52, 60B20}

\thanks{{\it Date}: June 29, 2013}

\thanks{Fulman was partially supported by NSA grant H98230-13-1-0219.}

\begin{abstract} Let $g$ be a random element of a finite classical group $G$, and let $\lambda_{z-1}(g)$ denote the
partition corresponding to the polynomial $z-1$ in the rational canonical form of $g$. As the rank of $G$ tends to infinity, $\lambda_{z-1}(g)$ tends to a partition distributed according to a Cohen-Lenstra type measure on partitions. We give sharp upper and lower bounds on the total variation distance between the random partition $\lambda_{z-1}(g)$ and the Cohen-Lenstra type measure.
\end{abstract}

\maketitle

\section{Introduction}

The study of conjugacy classes of random elements of a group is an active subject. Indeed, for symmetric groups two elements are conjugate if and only if they have the same cycle structure, so this amounts to the study of the cycle structure of random permutations. And for the unitary groups $U(n,\mathbb{C})$, two elements are conjugate if and only if they have the same set of eigenvalues, so this amounts to the study of eigenvalues of random matrices.

Motivated by these considerations (and unaware of the Cohen-Lenstra heuristics of number theory), the author, in a series of papers \cite{F2}, \cite{F3}, \cite{FG}, \cite{FST}, investigated the conjugacy classes of random elements of a finite classical group. Two elements of $GL(n,q)$ are conjugate if and only if they have the same rational canonical form \cite{H}. Moreover, if $g$ is an element of a finite classical group, there is a partition $\lambda_{z-1}(g)$ corresponding to the polynomial $z-1$ in the rational canonical form of $g$. Letting the rank of $G$ tend to infinity, we proved that this random partition has a limiting distribution. For example if $G=GL(n,q)$, one obtains the distribution $P_{GL}$ on the set of all partitions of all non-negative integers, which chooses $\lambda$ with probability
\[ P_{GL}(\lambda) = \prod_{i \geq 1} (1-1/q^i) \cdot \frac{1}{|Aut(\lambda)|}, \] where $Aut(\lambda)$ denotes the automorphism group of a finite abelian group of type $\lambda$. We became fascinated with the combinatorics of such random partitions arising from random matrix theory over finite fields. In the papers \cite{F2}, \cite{F3}, \cite{FG}, \cite{FST}, we linked them to the Hall-Littlewood polynomials of symmetric function theory, and developed and applied probabilistic algorithms for growing such random partitions.

We were delighted to recently learn from Lengler \cite{L} that our work on random partitions is related to the Cohen-Lenstra heuristics \cite{CL} of number theory. Indeed, Cohen and Lenstra study random partitions chosen with probability \[ \prod_{i \geq 1} (1-1/q^i) \cdot \frac{1}{|Aut(\lambda)|},\] exactly the same formula as in our construction in the $GL$ case. It is beyond the scope of this paper to offer any sort of survey of Cohen-Lenstra heuristics, but we can assure the reader that research in the area is active and ongoing, with contributions from Bhargava, Malle, Ellenberg, Venkatesh, Poonen, Rains, and many others. We can recommend the papers \cite{D}, \cite{EV}, and the many references therein.

We are confident that all of the random partitions studied in the current paper will turn out to be related to Cohen-Lenstra heuristics. Indeed, our random partitions in the symplectic case were recently rediscovered in the Cohen-Lenstra context \cite{Ac}.
One of the goals of the current paper is to collect in one place all of the formulas for random partitions arising from random matrices over finite fields. These are currently ``scattered'' in the literature.

A second goal of the current paper is to quantify the convergence of the random partitions $\lambda_{z-1}(g)$ to their limit distributions. Recall that the total variation distance $||P-Q||_{TV}$ between two probability distributions on a set $X$ is defined as \[ ||P-Q||_{TV} = \frac{1}{2} \sum_{x \in X} |P(x)-Q(x)|.\] Let $\Lambda_{GL,z-1,n}$ denote the measure on partitions of size at most $n$ arising by taking the partition corresponding to the polynomial $z-1$ in the rational canonical form of a random element of $GL(n,q)$. One of the results of this paper is the sharp bound:
\[ \frac{.38}{q^{n+1}} \leq ||P_{GL} - \Lambda_{GL,z-1,n}||_{TV} \leq
\frac{14}{q^{n+1}}.\] This is more explicit and sharper than a similar recent result of Maples \cite{Map}, though we note that
Maples' main interest is different than ours: he proves universality of the distribution $P_{GL}$ for matrix ensembles where the entries are iid, but not necessarily uniform. We prove similar sharp bounds for the finite unitary, symplectic, and orthogonal groups, in both odd and even characteristic (where things can differ).

In terms of future work, it would be worthwhile to further explore the connections in \cite{F2} made between symmetric function theory (Hall-Littlewood polynomials) and random partitions arising from finite classical groups. In fact in work complementary to ours, Okounkov \cite {O1}, \cite{O2}, \cite{O3} makes many interesting connections between symmetric function theory (but not Hall-Littlewood polynomials) and random partitions (but not Cohen-Lenstra type measures). In fact one of his constructions, namely the definition of random partitions from Macdonald polynomials, was made independently in \cite{F2}. It would be very interesting to adapt Okounkov's methods to our setting.

The organization of this paper is as follows. Section \ref{GL} treats random partitions arising from the finite general linear groups. The unitary case is treated in Section \ref{U} and the symplectic case is treated in Section \ref{Sp}. Section \ref{O} treats random partitions arising from the finite orthogonal groups, and is split into two subsections, which consider odd and even characteristic respectively.

\section{General linear groups} \label{GL}

The Cohen-Lenstra measure \cite{CL} is a probability distribution on the set of all partitions of all non-negative integers.
We denote this measure by $P_{GL}$, since analogs for other finite classical groups will be given in other sections. A formula for the measure $P_{GL}$ is:
\[ P_{GL}(\lambda) = \prod_{i \geq 1} (1-1/q^i) \cdot \frac{1}{|Aut(\lambda)|} .\] Here $Aut(\lambda)$ denotes the automorphism group of a finite abelian group of type $\lambda$. Page 181 of \cite{Mac} gives the following explicit formula:
\[ |Aut(\lambda)| = q^{\sum_i (\lambda'_i)^2} \prod_i (1/q)_{m_i(\lambda)} .\] Here $m_i(\lambda)$ is the number of parts of $\lambda$ of size $i$, and $\lambda'$ is the partition dual to $\lambda$ in the sense that $\lambda'_i = m_i(\lambda) + m_{i+1}(\lambda) + \cdots$. Also $(1/q)_j$ denotes $(1-1/q)(1-1/q^2) \cdots (1-1/q^j)$.
\[ \]
{\it Remark:} The measure $P_{GL}$ is the special case ($u=1$) of a measure studied in \cite{F2}, which chooses $\lambda$ with probability \[ \prod_{i \geq 1} (1-u/q^i) \cdot \frac{u^{|\lambda|}}{|Aut(\lambda)|} .\] This probability can be rewritten as
\[ \prod_{i \geq 1} (1-u/q^i) \cdot \frac{P_{\lambda}(u/q,u/q^2,u/q^3,\cdots;1/q)}{q^{n(\lambda)}},\]
where $P_{\lambda}$ denotes a Hall-Littlewood polynomial and $n(\lambda)=\sum_i {\lambda_i' \choose 2}$. This suggests the study of more general measures based on Macdonald polynomials; see \cite{F2} for details, and for probabilistic algorithms for generating partitions according to this measure.

\[ \]

Next recall the rational canonical form of an element $g \in GL(n,q)$. This is discussed at length in Chapter 6 of the
textbook \cite{H}, and corresponds to the following combinatorial data. To each monic non-constant irreducible polynomial $\phi$ over $\mathbb{F}_q$, one associates a partition (perhaps the trivial partition) $\lambda_{\phi}$ of some non-negative integer $|\lambda_{\phi}|$. Let $deg(\phi)$ denote the degree of $\phi$. The only restrictions necessary for this data to arise
 from an element of $GL(n,q)$ are that $|\lambda_z|=0$ and $\sum_{\phi} |\lambda_{\phi}| deg(\phi) = n$.

We are interested in the partition (of size at most $n$) corresponding to the polynomial $z-1$ in the rational canonical form of a random element of $GL(n,q)$, and we let $\Lambda_{GL,z-1,n}$ denote the corresponding measure on partitions. From \cite{F0}, it is known that as $n \rightarrow \infty$, the measure $\Lambda_{GL,z-1,n}$ converges to the measure $P_{GL}$. The main result of this section gives sharp bounds for this convergence.

The next lemma is due to Euler; see page 19 of \cite{An}.

\begin{lemma} \label{eul}
\begin{enumerate}
\item $\prod_{i \geq 1} (1-u/q^i) = \sum_{j \geq 0} \frac{(-u)^j}{(q^j-1) \cdots (q-1)}$.
\item $\prod_{i \geq 1} (1-u/q^i)^{-1} = \sum_{j \geq 0} \frac{u^j q^{{j \choose 2}}}{(q^j-1) \cdots (q-1)}$.
\end{enumerate}
\end{lemma}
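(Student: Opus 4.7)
The plan is to derive both identities from a single $q$-functional equation, treating both sides as formal power series in $u$.

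For part (1), set $F(u) = \prod_{i \geq 1}(1 - u/q^i)$ and peel off the first factor to obtain $F(u) = (1 - u/q)\,F(u/q)$. Writing $F(u) = \sum_{j \geq 0} a_j u^j$ with $a_0 = 1$, the right-hand side expands as $\sum_{j \geq 0} (a_j - a_{j-1}) u^j/q^j$ (with the convention $a_{-1} = 0$). Matching the coefficient of $u^j$ yields $a_j q^j = a_j - a_{j-1}$, that is, $a_j = -a_{j-1}/(q^j - 1)$. Iterating with $a_0 = 1$ gives $a_j = (-1)^j/[(q^j - 1)(q^{j-1} - 1) \cdots (q - 1)]$, which is the claimed formula.

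For part (2), apply the same trick to $G(u) = 1/F(u) = \prod_{i \geq 1}(1 - u/q^i)^{-1}$. Taking reciprocals of the functional equation above yields $(1 - u/q)\,G(u) = G(u/q)$. Writing $G(u) = \sum_j b_j u^j$ with $b_0 = 1$ and matching coefficients gives $b_j - b_{j-1}/q = b_j/q^j$, which rearranges to $b_j = q^{j-1} b_{j-1}/(q^j - 1)$. Telescoping then produces $b_j = q^{0 + 1 + \cdots + (j-1)}/[(q^j - 1) \cdots (q - 1)] = q^{\binom{j}{2}}/[(q^j - 1) \cdots (q - 1)]$, as desired.

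The argument is essentially mechanical, so there is no real obstacle beyond bookkeeping the recurrences. Both functional equations together with their initial condition $a_0 = b_0 = 1$ determine the coefficients uniquely, and the product $\prod_i (1 - u/q^i)$ converges absolutely for $|u| < q$, so the formal identity is automatically an identity of analytic functions on that disk if one prefers that viewpoint.
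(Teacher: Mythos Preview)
Your proof is correct: the functional equations $F(u)=(1-u/q)F(u/q)$ and $(1-u/q)G(u)=G(u/q)$ hold as stated, and the coefficient recursions you extract from them are computed and solved accurately. The convergence remark is also fine (though unnecessary, since both sides make sense as formal power series in $u$ with coefficients in $\mathbb{Q}(q)$).

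As for comparison with the paper: the paper does not actually supply a proof of this lemma. It simply attributes the identities to Euler and cites page~19 of Andrews' \emph{The Theory of Partitions}. Your argument via the $q$-functional equation is in fact the classical one---it is essentially Euler's own derivation and is the proof Andrews gives---so you have filled in precisely what the reference would provide.
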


A proof of the following lemma is contained in \cite{St}.

\begin{lemma} \label{sto}
\[ \sum_{\lambda} \frac{u^{|\lambda|}}{|Aut(\lambda)|} = \prod_{i \geq 1} (1-u/q^i)^{-1} .\]
\end{lemma}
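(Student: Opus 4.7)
The plan is to extract the coefficient of $u^n$ from each side and reduce to a finite identity. By Lemma \ref{eul}(2), the right-hand side expands as $\sum_{n \geq 0} u^n \, q^{\binom{n}{2}}/\prod_{j=1}^n(q^j-1)$, so the claim is equivalent to
\[ \sum_{\lambda \vdash n} \frac{1}{|Aut(\lambda)|} = \frac{q^{\binom{n}{2}}}{\prod_{j=1}^n(q^j-1)} \qquad (\ast) \]
for each $n \geq 0$. The cases $n = 0, 1$ follow at once from the explicit formula for $|Aut(\lambda)|$ (using $|Aut((1))| = q - 1$).

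For general $n$, I would prove $(\ast)$ by induction, grouping partitions $\lambda$ of $n$ by their number of parts $K = \ell(\lambda) = \lambda_1'$. Removing the first column gives a bijection between partitions $\lambda \vdash n$ with $K$ parts and partitions $\mu \vdash n-K$ with $\ell(\mu) \leq K$. Combining $\lambda_1' = K$, $\lambda_{i+1}' = \mu_i'$ for $i \geq 1$, $m_1(\lambda) = K - \ell(\mu)$, and $m_i(\lambda) = m_{i-1}(\mu)$ for $i \geq 2$ with the defining formula for $|Aut|$ yields the factorization
\[ |Aut(\lambda)| = q^{K^2} \, (1/q)_{K - \ell(\mu)} \, |Aut(\mu)|. \]
Substituting and swapping sums rewrites the left side of $(\ast)$ as a double sum over $K$ and $\mu$, with an inner sum coupling $K$ and $\ell(\mu)$ through the factor $(1/q)_{K - \ell(\mu)}$.

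The main obstacle is evaluating this coupled inner sum: it is not a direct multiple of $\sum_\mu 1/|Aut(\mu)|$, so plain induction on $n$ does not close. I would handle this by refining the induction to track both $|\mu|$ and $\ell(\mu)$ jointly, after which the resulting two-variable identity collapses via the $q$-binomial theorem, itself a consequence of Lemma \ref{eul}(2). A conceptually cleaner route, which I would prefer if feasible, uses the reformulation $u^{|\lambda|}/|Aut(\lambda)| = P_\lambda(u/q, u/q^2, \ldots; 1/q)/q^{n(\lambda)}$ noted in the Remark: the target identity is then a specialization of a Cauchy-type identity for Hall-Littlewood polynomials, which avoids the induction entirely and appears to be the approach pursued in Stong \cite{St}.
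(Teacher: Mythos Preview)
The paper does not actually prove this lemma; it simply cites Stong \cite{St}. So there is no in-paper argument to compare your proposal against.

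On the substance of your sketch: the column-removal bijection and the factorization
\[
|Aut(\lambda)| \;=\; q^{K^2}\,(1/q)_{K-\ell(\mu)}\,|Aut(\mu)|
\]
are both correct, and you are right that the resulting sum does not close under a one-variable induction because of the $(1/q)_{K-\ell(\mu)}$ factor. The two-variable refinement you describe---carrying the statistic $\ell(\mu)$ through and then collapsing the inner sum with a $q$-series identity derived from Lemma~\ref{eul}---is a workable way to finish, and the Hall--Littlewood specialization you mention is a cleaner alternative. However, as written your proposal is explicitly a \emph{plan} (``I would prove'', ``I would handle this by'', ``if feasible'') rather than a proof: neither route is actually executed. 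If this is meant to replace the citation, you must carry one of the two arguments to completion; as it stands it does not constitute a proof, only a correct outline of how one could be obtained.
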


Next we obtain an explicit expression for $||P_{GL} - \Lambda_{GL,z-1,n}||_{TV}$.

\begin{prop} \label{explicit}
\begin{eqnarray*}
& & ||P_{GL} - \Lambda_{GL,z-1,n}||_{TV} \\
& = & \frac{1}{2} \sum_{m>n} \frac{q^{{m \choose 2}}}{(q^m-1) \cdots (q-1)} \prod_i (1-1/q^i) \\
& & + \frac{1}{2} \sum_{m=0}^n \frac{q^{{m \choose 2}}}{(q^m-1) \cdots (q-1)}
\left| \prod_i (1-1/q^i) - \sum_{j=0}^{n-m} \frac{(-1)^j}{(q^j-1) \cdots (q-1)} \right|.
\end{eqnarray*}
\end{prop}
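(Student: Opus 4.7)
The plan is to derive an exact formula for $\Lambda_{GL,z-1,n}(\lambda)$ for each partition $\lambda$, compare it term by term with $P_{GL}(\lambda)$, and then evaluate the resulting sum of absolute values by grouping partitions according to their size $m=|\lambda|$.

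I would begin by appealing to the Kung--Stong cycle index for $GL(n,q)$, which factorizes $\sum_{n}\frac{u^{n}}{|GL(n,q)|}\sum_{g\in GL(n,q)}\prod_{\phi}x_{\phi,\lambda_{\phi}(g)}$ as a product of local factors indexed by the monic irreducible polynomials $\phi\neq z$. Specializing $x_{z-1,\lambda}=1$ and $x_{z-1,\mu}=0$ for $\mu\neq\lambda$ (with all other $x_{\phi,\mu}$ set to $1$), the left-hand side collapses to $\sum_{n}\Lambda_{GL,z-1,n}(\lambda)u^{n}$. The $\phi=z-1$ local factor becomes $u^{|\lambda|}/|Aut(\lambda)|$, and the product of the remaining local factors can be computed by dividing the all-ones specialization $\sum_{n}u^{n}=1/(1-u)$ by the full $\phi=z-1$ factor, which by Lemma \ref{sto} equals $\prod_{i}(1-u/q^{i})^{-1}$. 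This yields
\[
\sum_{n\geq 0}\Lambda_{GL,z-1,n}(\lambda)\,u^{n}
= \frac{u^{|\lambda|}}{|Aut(\lambda)|}\cdot\frac{1}{1-u}\prod_{i\geq 1}(1-u/q^{i}).
\]
Expanding $\prod_{i}(1-u/q^{i})$ via Lemma \ref{eul}(1) and multiplying by $\sum_{k}u^{k}$, the coefficient of $u^{n}$ gives $\Lambda_{GL,z-1,n}(\lambda)=\frac{1}{|Aut(\lambda)|}\sum_{j=0}^{n-|\lambda|}\frac{(-1)^{j}}{(q^{j}-1)\cdots(q-1)}$ for $|\lambda|\leq n$ (and zero otherwise). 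Since Lemma \ref{eul}(1) also gives $P_{GL}(\lambda)=\frac{1}{|Aut(\lambda)|}\sum_{j\geq 0}\frac{(-1)^{j}}{(q^{j}-1)\cdots(q-1)}$, the signed difference for $|\lambda|\leq n$ equals $\frac{1}{|Aut(\lambda)|}\bigl(\prod_{i}(1-1/q^{i})-\sum_{j=0}^{n-|\lambda|}\frac{(-1)^{j}}{(q^{j}-1)\cdots(q-1)}\bigr)$, while for $|\lambda|>n$ it is simply $P_{GL}(\lambda)>0$, needing no absolute value.

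The final step is to split $\tfrac12\sum_{\lambda}|P_{GL}(\lambda)-\Lambda_{GL,z-1,n}(\lambda)|$ at the threshold $|\lambda|=n$, group by $m=|\lambda|$, and evaluate $\sum_{|\lambda|=m}\frac{1}{|Aut(\lambda)|}$ by reading off the coefficient of $u^{m}$ in the identity $\sum_{\lambda}\frac{u^{|\lambda|}}{|Aut(\lambda)|}=\sum_{j\geq 0}\frac{u^{j}q^{{j\choose 2}}}{(q^{j}-1)\cdots(q-1)}$ (which combines Lemmas \ref{sto} and \ref{eul}(2)). This yields $\sum_{|\lambda|=m}\frac{1}{|Aut(\lambda)|}=\frac{q^{{m\choose 2}}}{(q^{m}-1)\cdots(q-1)}$, and substituting into the two pieces produces the stated formula. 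The only nontrivial ingredient is the cycle-index factorization used in the first step; this is a standard tool in the literature cited in the introduction (e.g.\ \cite{F2}), so once it is in place the argument is pure bookkeeping with Lemmas \ref{eul} and \ref{sto}, and I do not anticipate any real obstacle.
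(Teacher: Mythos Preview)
Your proposal is correct and follows essentially the same approach as the paper: both use the cycle index of $GL(n,q)$ to show that $\Lambda_{GL,z-1,n}(\lambda)=\frac{1}{|Aut(\lambda)|}\sum_{j=0}^{n-|\lambda|}\frac{(-1)^{j}}{(q^{j}-1)\cdots(q-1)}$, then split the total variation sum by $m=|\lambda|$ and evaluate $\sum_{|\lambda|=m}\frac{1}{|Aut(\lambda)|}$ via Lemmas~\ref{sto} and~\ref{eul}(2). The only cosmetic difference is that the paper extracts the coefficient of $u^{n-m}$ directly from the cycle index product over $\phi\neq z,z-1$ (multiplying and dividing by the $z-1$ factor), whereas you first write the full generating function $\sum_{n}\Lambda_{GL,z-1,n}(\lambda)u^{n}$ and then extract the coefficient of $u^{n}$; the underlying computation is identical.
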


\begin{proof} First we consider the contribution to the total variation distance coming from $\lambda$ of size $m>n$. Since $\Lambda_{GL,z-1,n}(\lambda)=0$ for such $\lambda$, the contribution is \[ \frac{1}{2} \sum_{|\lambda|=m>n} \frac{\prod_i (1-1/q^i)}{|Aut(\lambda)|}.\] By Lemma \ref{sto} and part 2 of Lemma \ref{eul}, this is equal to
\[ \frac{1}{2} \sum_{m>n} \frac{q^{{m \choose 2}}}{(q^m-1) \cdots (q-1)} \prod_i (1-1/q^i).\]

Next consider the probability that $\Lambda_{GL,z-1,n}$ associates to $\lambda$ when $|\lambda|=m \leq n$. By the cycle index of the general linear groups \cite{F1}, this probability is equal to the coefficient of $u^{n-m}$ in
\[ \frac{1}{|Aut(\lambda)|} \prod_{\phi \neq z,z-1} \left[ \sum_{\mu} \frac{u^{deg(\phi) |\mu|}}{|Aut(\mu)|_{q \rightarrow q^{deg(\phi)}}} \right]. \] Multiplying and dividing by $\sum_{\mu} \frac{u^{|\mu|}}{|Aut(\mu)|}$ gives that this is equal to the coefficient of $u^{n-m}$ in
\begin{eqnarray*}
& & \frac{1}{|Aut(\lambda)|} \frac{1}{\sum_{\mu} \frac{u^{|\mu|}}{|Aut(\mu)|}}
\prod_{\phi \neq z} \left[ \sum_{\mu} \frac{u^{deg(\phi) |\mu|}}{|Aut(\mu)|_{q \rightarrow q^{deg(\phi)}}}. \right] \\
& = & \frac{1}{|Aut(\lambda)|} \frac{1}{\sum_{\mu} \frac{u^{|\mu|}}{|Aut(\mu)|}} \frac{1}{1-u}.
\end{eqnarray*} The equality followed since setting all variables equal to one in the cycle index of $GL(n,q)$ yields
$1/(1-u)$. From Lemma \ref{sto}, this is the coefficient of $u^{n-m}$ in \[ \frac{1}{|Aut(\lambda)|} \frac{\prod_i (1-u/q^i)}{1-u}. \] Thus $\Lambda_{GL,z-1,n}(\lambda)$ is equal to \[ \frac{1}{|Aut(\lambda)|} \sum_{j=0}^{n-m} Coef. \ u^j \ in \ \prod_{i} (1-u/q^i),\] which by part 1 of Lemma \ref{eul} is equal to \[ \frac{1}{|Aut(\lambda)|} \sum_{j=0}^{n-m} \frac{(-1)^j}{(q^j-1) \cdots (q-1)} .\]

It follows that the contribution to $||P_{GL} - \Lambda_{GL,z-1,n}||_{TV}$ coming from $\lambda$ with
$|\lambda|=m \leq n$ is
\[ \frac{1}{2} \sum_{m=0}^n \sum_{|\lambda|=m} \frac{1}{|Aut(\lambda)|} \left| \prod_i (1-1/q^i) - \sum_{j=0}^{n-m}
\frac{(-1)^j}{(q^j-1) \cdots (q-1)} \right|.\] By Lemma \ref{sto} and part 2 of Lemma \ref{eul},
\[ \sum_{|\lambda|=m} \frac{1}{|Aut(\lambda)|} = Coef. \ u^m \ in \ \prod_{i} (1-u/q^i)^{-1} = \frac{q^{{m \choose 2}}}{(q^m-1) \cdots (q-1)} .\] Thus the contribution to $||P_{GL} - \Lambda_{GL,z-1,n}||_{TV}$ from $\lambda$ with
$|\lambda|=m \leq n$ is
\[ \frac{1}{2} \sum_{m=0}^n \frac{q^{{m \choose 2}}}{(q^m-1) \cdots (q-1)} \left|
\prod_i (1-1/q^i) - \sum_{j=0}^{n-m} \frac{(-1)^j}{(q^j-1) \cdots (q-1)} \right|.\] This completes the proof.
\end{proof}

Next we prove the main result of this section.

\begin{theorem} \label{glbound} For $n \geq 1$,
\[ \frac{.38}{q^{n+1}} \leq ||P_{GL} - \Lambda_{GL,z-1,n}||_{TV} \leq
\frac{14}{q^{n+1}}.\]
\end{theorem}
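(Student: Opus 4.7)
The plan is to start from the exact formula in Proposition~\ref{explicit}. Using $(q^i-1) = q^i(1-1/q^i)$, one has $(q^m-1)\cdots(q-1) = q^{\binom{m+1}{2}}\prod_{i=1}^m(1-1/q^i)$, which simplifies $\frac{q^{\binom{m}{2}}}{(q^m-1)\cdots(q-1)} = \frac{1}{q^m \prod_{i=1}^m(1-1/q^i)}$. By Lemma~\ref{eul}(1), the quantity $\prod_i(1-1/q^i) - \sum_{j=0}^{n-m}\frac{(-1)^j}{(q^j-1)\cdots(q-1)}$ equals the tail $\sum_{j>n-m}\frac{(-1)^j}{(q^j-1)\cdots(q-1)}$, an alternating series whose terms strictly decrease in absolute value; so its absolute value lies between the first term minus the second and the first term itself.

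For the upper bound, I would treat the two parts of the formula separately. The $m>n$ piece is $\frac{1}{2}\sum_{m>n}\frac{\prod_{i>m}(1-1/q^i)}{q^m}$; dropping the product and summing a geometric series gives $\leq \frac{1}{2(q-1)q^n} \leq \frac{1}{q^{n+1}}$ for $q\geq 2$. For the $m\leq n$ piece, I would bound the alternating tail by its first term $\frac{1}{(q^{n-m+1}-1)\cdots(q-1)}$, apply the same identity again, and substitute $k = n-m$; the combined exponent works out to $m + \binom{n-m+2}{2} = n+1+\binom{k+1}{2}$, so the piece is bounded by $\frac{1}{2q^{n+1}}\sum_{k=0}^n\frac{q^{-\binom{k+1}{2}}}{\prod_{i=1}^{n-k}(1-1/q^i)\,\prod_{i=1}^{k+1}(1-1/q^i)}$. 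Bounding each double product below by $\prod_{i=1}^\infty(1-1/q^i)^2 \geq \prod_{i=1}^\infty(1-1/2^i)^2 > 0.083$, and bounding the remaining series $\sum_{k\geq 0}q^{-\binom{k+1}{2}}$ above by its value at $q=2$ (approximately $1.65$), yields a total comfortably below $14/q^{n+1}$.

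For the lower bound, since the TV distance is a sum of nonnegative terms, it suffices to retain the single $m=n$ summand of the second sum, namely $\frac{|S_1|}{2q^n\prod_{i=1}^n(1-1/q^i)}$ where $S_1 = \sum_{j\geq 1}\frac{(-1)^j}{(q^j-1)\cdots(q-1)}$. Using $\prod_{i=1}^n(1-1/q^i) \leq 1$ and the alternating-series estimate $|S_1| \geq \frac{1}{q-1} - \frac{1}{(q^2-1)(q-1)} = \frac{q^2-2}{(q-1)(q^2-1)}$, this term is at least $\frac{q(q^2-2)}{2(q-1)^2(q+1)\,q^{n+1}}$. A direct check shows the coefficient $\frac{q(q^2-2)}{2(q-1)^2(q+1)}$ is at least $\tfrac{1}{2}$ for every integer $q\geq 2$ (approaching $\tfrac12$ as $q\to\infty$ and being larger for small $q$), hence certainly exceeds $0.38$.

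The main obstacle is the upper bound: while the $m>n$ tail is handled by a single geometric series, collecting all $m\leq n$ contributions into a single bound uniform in $q$ requires spotting the exponent simplification $m + \binom{n-m+2}{2} = n+1+\binom{k+1}{2}$ after the substitution $k = n-m$, and exploiting a uniform lower bound on $\prod_{i\geq 1}(1-1/q^i)$ (valid for all $q\geq 2$). The lower bound is comparatively immediate once one notices that retaining a single dominant summand already produces the correct order $1/q^{n+1}$.
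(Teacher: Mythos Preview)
Your argument is correct. The upper bound is essentially the paper's proof: both bound the alternating tail by its first term, both use the identity $(q^j-1)\cdots(q-1)=q^{\binom{j+1}{2}}\prod_{i\le j}(1-1/q^i)$ and the exponent simplification $m+\binom{n-m+2}{2}=n+1+\binom{k+1}{2}$, and both finish with a uniform lower bound on $\prod_i(1-1/q^i)$; your numerics ($1/0.083$ and $1.65$) give a slightly sharper constant than the paper's $(3.5)^2\cdot 2$, but the structure is identical.

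The lower bound, however, is obtained differently. The paper keeps the $m=n+1$ term of the \emph{first} (tail) sum in Proposition~\ref{explicit}, which reduces to $\tfrac{1}{2q^{n+1}}\prod_{i\ge n+2}(1-1/q^i)$ and then requires the pentagonal number theorem to show $\prod_{j\ge3}(1-1/2^j)\ge 0.77$, giving $0.38/q^{n+1}$. You instead keep the $m=n$ term of the \emph{second} sum, where the absolute value becomes $|1-\prod_i(1-1/q^i)|$ and a two--term alternating estimate yields the clean rational coefficient $\frac{q(q^2-2)}{2(q-1)^2(q+1)}\ge\tfrac12$. Your route is more elementary (no pentagonal number theorem) and gives a better constant; the paper's route has the mild advantage that it generalizes verbatim to the other classical groups treated later, since in every case the tail sum at $m=n+1$ has the same simple form.
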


\begin{proof} To begin we consider the lower bound. By considering the $m=n+1$ term in Proposition \ref{explicit},
it follows that
\begin{eqnarray*}
||P_{GL} - \Lambda_{GL,z-1,n}||_{TV} & \geq & \frac{1}{2} \frac{q^{{n+1 \choose 2}}}{(q^{n+1}-1) \cdots (q-1)} \prod_i (1-1/q^i) \\
& = & \frac{1}{2q^{n+1}} (1-1/q^{n+2})(1-1/q^{n+3}) \cdots.
\end{eqnarray*} Since $n \geq 1$ and $q \geq 2$, this is at least $.38/q^{n+1}$. Note that we have used the bound
\begin{eqnarray*}
 \prod_{j \geq 3} (1-1/2^j) & = & \frac{8}{3} \prod_{j \geq 1} (1-1/2^j) \\
 & \geq & \frac{8}{3} (1-1/2-1/2^2+1/2^5+1/2^7-1/2^{12}-1/2^{15}) \\
 & \geq & .77,
\end{eqnarray*} which follows from Euler's pentagonal number theorem, stated on page 11 of \cite{An}.

Next we consider the upper bound. First note that
\begin{eqnarray*}
\frac{1}{2} \sum_{m>n} \frac{q^{{m \choose 2}}}{(q^m-1) \cdots (q-1)} \prod_i (1-1/q^i) & \leq & \frac{1}{2} \sum_{m>n} \frac{q^{{m \choose 2}}}{q^{{m+1 \choose 2}}} \\
& = & \frac{1}{2} \sum_{m>n} \frac{1}{q^m} \\
& = & \frac{1}{2 q^{n+1} (1-1/q)} \\
& \leq & \frac{1}{q^{n+1}}.
\end{eqnarray*} Second, note that by applying part 1 of Lemma \ref{eul} with $u=1$,
\begin{eqnarray*}
& & \frac{1}{2} \sum_{m=0}^n \frac{q^{{m \choose 2}}}{(q^m-1) \cdots (q-1)}
\left| \prod_i (1-1/q^i) - \sum_{j=0}^{n-m} \frac{(-1)^j}{(q^j-1) \cdots (q-1)} \right| \\
& \leq & \frac{1}{2} \sum_{m=0}^n \frac{q^{{m \choose 2}}}{(q^m-1) \cdots (q-1)} \frac{1}{(q^{n-m+1}-1) \cdots (q-1)} \\
& = & \frac{1}{2} \sum_{m=0}^n \frac{q^{{m \choose 2}}}{q^{{m+1 \choose 2}} (1-1/q) \cdots (1-1/q^m)} \\
& & \cdot \frac{1}{q^{{n-m+2 \choose 2}} (1-1/q) \cdots (1-1/q^{n-m+1})}.
\end{eqnarray*} Since $\prod_i (1-1/q^i)^{-1} \leq 3.5$, this is at most
\[ \frac{(3.5)^2}{2} \sum_{m=0}^n \frac{1}{q^{m+{n-m+2 \choose 2}}} \leq  \frac{(3.5)^2}{2} \frac{1}{q^{n+1}(1-1/q)} \leq
\frac{13}{q^{n+1}}.\] 

The upper bound of the theorem follows immediately by combining the bounds in the previous paragraph.
\end{proof}

\section{Unitary groups} \label{U}

Next we define a unitary analog $P_U$ of the Cohen-Lenstra measure on the set of all partitions of all non-negative integers.
A formula for the measure $P_U$ is:
\[ P_U(\lambda) = \prod_{i \geq 1} (1+1/(-q)^i) \frac{1}{|Aut_U(\lambda)|} .\]
Here $|Aut_U(\lambda)|$  is defined by the formula
\[ |Aut_U(\lambda)| = q^{\sum_i (\lambda'_i)^2} \prod_i (-1/q)_{m_i(\lambda)}, \] where $(-1/q)_j = (1+1/q)(1-1/q^2) \cdots (1-(-1)^j/q^j)$.

\[ \]
{\it Remark:} The measure $P_{U}$ is the special case ($u=1$) of a measure studied in \cite{F2}, which chooses $\lambda$ with probability \[ \prod_{i \geq 1} (1+u/(-q)^i) \cdot \frac{u^{|\lambda|}}{|Aut_U(\lambda)|} .\]

\[ \]

To define the probability measure $\Lambda_{U,z-1,n}$, we use, as in the $GL$ case, the theory of rational canonical forms. Given an element $g \in U(n,q)$, there is a partition $\lambda_{z-1}(g)$ of size at most $n$ associated to the polynomial $z-1$. When $g$ is chosen uniformly at random from $U(n,q)$, we let $\Lambda_{U,z-1,n}$ denote the corresponding measure on partitions. From \cite{F0}, it is known that as $n \rightarrow \infty$, the measure $\Lambda_{U,z-1,n}$ converges to the measure $P_U$. The main result of this section makes this quantitative, proving that \[ \frac{1}{6 q^{n+1}} \leq ||P_{U} - \Lambda_{U,z-1,n}||_{TV}  \leq \frac{3}{q^{n+1}}.\]

Lemma \ref{eulU} is obtained by replacing $u,q$ by $-u,-q$ in Lemma \ref{eul}.

\begin{lemma} \label{eulU}
\begin{enumerate}
\item $\prod_{i \geq 1} (1+u/(-q)^i) = \sum_{j \geq 0} \frac{(-1)^{{j+1 \choose 2}} u^j}{(q^j-(-1)^j) \cdots (q+1)}$.
\item $\prod_{i \geq 1} (1+u/(-q)^i)^{-1} = \sum_{j \geq 0} \frac{u^j q^{{j \choose 2}}}{(q^j-(-1)^j) \cdots (q+1)}$.
\end{enumerate}
\end{lemma}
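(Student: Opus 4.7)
The plan is to follow the hint preceding the statement: each identity of Lemma \ref{eulU} should fall out of the corresponding part of Lemma \ref{eul} after the formal substitution $u \mapsto -u$, $q \mapsto -q$. The left-hand sides line up immediately, since $1 - (-u)/(-q)^i = 1 + u/(-q)^i$, so the whole task is to rearrange the substituted right-hand sides into the form in which the lemma is stated.

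The main computational step is to simplify the denominator $((-q)^j - 1)\cdots((-q) - 1)$ that appears after substitution. I would split its factors according to the parity of $k$: if $k$ is even then $(-q)^k - 1 = q^k - (-1)^k$, while if $k$ is odd then $(-q)^k - 1 = -(q^k + 1) = -(q^k - (-1)^k)$. Since exactly $\lceil j/2 \rceil$ of the integers $k \in \{1,\ldots,j\}$ are odd, pulling out the minus signs yields
\[
((-q)^j - 1)((-q)^{j-1} - 1) \cdots ((-q) - 1) = (-1)^{\lceil j/2 \rceil}(q^j - (-1)^j)(q^{j-1} - (-1)^{j-1}) \cdots (q + 1).
\]
I would then record the parity identity $\lceil j/2 \rceil \equiv \binom{j+1}{2} \pmod{2}$, verified by a routine check on the four residues of $j$ modulo $4$.

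For part (1), the substituted numerator $(-(-u))^j = u^j$ combines with the denominator computation above to produce the sign $(-1)^{\lceil j/2 \rceil} = (-1)^{\binom{j+1}{2}}$, giving the claimed expression. For part (2), the substituted numerator is $(-u)^j(-q)^{\binom{j}{2}}$, contributing a sign $(-1)^{j + \binom{j}{2}} = (-1)^{\binom{j+1}{2}}$; multiplying by the $(-1)^{\lceil j/2 \rceil}$ coming from the denominator gives $(-1)^{2\binom{j+1}{2}} = 1$, so the sign disappears entirely and we are left with the stated formula. No substantial obstacle arises, since the identities are essentially just a reindexing of Lemma \ref{eul}; the only care needed is in the sign bookkeeping, specifically the identification of $(-1)^{\lceil j/2 \rceil}$ with the $(-1)^{\binom{j+1}{2}}$ appearing in the statement.
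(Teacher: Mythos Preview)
Your proposal is correct and follows exactly the route the paper indicates: the paper's entire proof is the one-line remark that Lemma~\ref{eulU} is obtained from Lemma~\ref{eul} by the substitution $u\mapsto -u$, $q\mapsto -q$, and you have simply supplied the sign bookkeeping that the paper leaves implicit. The key identities you use---$((-q)^k-1)=(-1)^{[k\ \mathrm{odd}]}(q^k-(-1)^k)$, $\lceil j/2\rceil\equiv\binom{j+1}{2}\pmod 2$, and $j+\binom{j}{2}=\binom{j+1}{2}$---are all correct, so there is no gap.
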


The next lemma is a unitary analog of Lemma \ref{sto}.

\begin{lemma} \label{stoU}
\[ \sum_{\lambda} \frac{u^{|\lambda|}}{|Aut_U(\lambda)|} = \prod_{i \geq 1} (1+u/(-q)^i)^{-1}.\]
\end{lemma}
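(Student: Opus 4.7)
The plan is to obtain Lemma \ref{stoU} from Lemma \ref{sto} by the same substitution trick that the paper used to derive Lemma \ref{eulU} from Lemma \ref{eul}: replace $u$ by $-u$ and $q$ by $-q$ throughout Lemma \ref{sto}. As Lemma \ref{sto} is an identity between a formal series in $u$ and an infinite product with coefficients rational in $q$, the substitution is legitimate.

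After the substitution, the right-hand side of Lemma \ref{sto} becomes
\[ \prod_{i \geq 1} \bigl(1 - (-u)/(-q)^i\bigr)^{-1} = \prod_{i \geq 1} \bigl(1 + u/(-q)^i\bigr)^{-1}, \]
which is already the right-hand side of Lemma \ref{stoU}. So the only remaining task is to identify the substituted left-hand side with $\sum_\lambda u^{|\lambda|}/|Aut_U(\lambda)|$; equivalently, to verify
\[ |Aut(\lambda)|\bigl|_{q \to -q} \;=\; (-1)^{|\lambda|}\,|Aut_U(\lambda)|, \]
so that the extraneous sign $(-u)^{|\lambda|}$ collapses back to $u^{|\lambda|}$.

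This identity splits cleanly into two pieces, using the formula $|Aut(\lambda)| = q^{\sum_i (\lambda'_i)^2} \prod_i (1/q)_{m_i(\lambda)}$. First, under $q \to -q$ the Pochhammer factor transforms as $(1/q)_j \mapsto \prod_{k=1}^j \bigl(1 - 1/(-q)^k\bigr) = \prod_{k=1}^j \bigl(1 - (-1)^k/q^k\bigr) = (-1/q)_j$, matching the factor appearing in $|Aut_U(\lambda)|$ on the nose. Second, $q^{\sum_i (\lambda'_i)^2} \mapsto (-q)^{\sum_i (\lambda'_i)^2}$ picks up a sign $(-1)^{\sum_i (\lambda'_i)^2}$; since $x^2 \equiv x \pmod 2$, this equals $(-1)^{\sum_i \lambda'_i} = (-1)^{|\lambda|}$. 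Combining the two pieces yields the displayed identity and hence Lemma \ref{stoU}.

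I do not anticipate any real obstacle here: the proof is essentially a mechanical transport of Lemma \ref{sto} through a sign-changing substitution, exactly parallel to the Euler-identity case. The only subtlety worth spelling out is the parity observation $\sum_i (\lambda'_i)^2 \equiv |\lambda| \pmod 2$; without it, the sign bookkeeping between $(-q)^{\sum_i (\lambda'_i)^2}$ and $(-u)^{|\lambda|}$ would fail to balance.
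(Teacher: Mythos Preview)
Your proposal is correct and follows exactly the same approach as the paper: substitute $u \to -u$, $q \to -q$ in Lemma \ref{sto} and verify that this carries the left-hand side to $\sum_\lambda u^{|\lambda|}/|Aut_U(\lambda)|$. The paper merely asserts ``one checks that'' the two sums agree, whereas you have spelled out the check (including the parity observation $\sum_i (\lambda'_i)^2 \equiv |\lambda| \pmod 2$), so your version is in fact more detailed.
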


\begin{proof} From the formulas for $|Aut(\lambda)|$ and $|Aut_U(\lambda)|$, one checks that
\[ \sum_{\lambda} \frac{u^{|\lambda|}}{|Aut_U(\lambda)|} = \sum_{\lambda} \frac{(-u)^{|\lambda|}}{|Aut(\lambda)|_{q \rightarrow -q}} .\]
The result now follows from Lemma \ref{sto}, since setting $u \rightarrow -u, q \rightarrow -q$ in $\prod_i (1-u/q^i)^{-1}$ yields $\prod_i (1+u/(-q)^i)^{-1}$.
\end{proof}

Next we give an explicit expression for $||P_U - \Lambda_{U,z-1,n}||_{TV}$.

\begin{prop} \label{explicitU}
\begin{eqnarray*}
& & ||P_{U} - \Lambda_{U,z-1,n}||_{TV} \\
& = & \frac{1}{2} \sum_{m>n} \frac{q^{{m \choose 2}}}{(q^m-(-1)^m) \cdots (q+1)} \prod_i (1+1/(-q)^i) \\
& & + \frac{1}{2} \sum_{m=0}^n \frac{q^{{m \choose 2}}}{(q^m-(-1)^m) \cdots (q+1)} \\
& & \cdot \left| \prod_i (1+1/(-q)^i) - \sum_{j=0}^{n-m} \frac{(-1)^{{j+1 \choose 2}}}{(q^j-(-1)^j) \cdots (q+1)} \right|.
\end{eqnarray*}
\end{prop}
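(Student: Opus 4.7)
The plan is to mirror the proof of Proposition \ref{explicit} verbatim, replacing each $GL$-input with its unitary counterpart: the role of Lemma \ref{sto} is played by Lemma \ref{stoU}, that of Lemma \ref{eul} by Lemma \ref{eulU}, and the $GL(n,q)$ cycle index is replaced by the cycle index for $U(n,q)$. I would begin by splitting the sum in the definition of $\|P_U - \Lambda_{U,z-1,n}\|_{TV}$ according to whether $|\lambda|=m$ exceeds $n$ or not.

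For the ``tail'' piece $m>n$, one has $\Lambda_{U,z-1,n}(\lambda)=0$, so the contribution is $\tfrac12 \prod_{i\geq 1}(1+1/(-q)^i) \sum_{m>n} \sum_{|\lambda|=m} 1/|Aut_U(\lambda)|$. By Lemma \ref{stoU} the inner sum is the coefficient of $u^m$ in $\prod_i (1+u/(-q)^i)^{-1}$, and by part (2) of Lemma \ref{eulU} this coefficient equals $q^{\binom{m}{2}}/((q^m-(-1)^m)\cdots(q+1))$. This produces the first double sum in the proposition.

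For the ``bulk'' piece $m\leq n$, I would invoke the cycle index for $U(n,q)$ to write $\Lambda_{U,z-1,n}(\lambda)$ as the coefficient of $u^{n-m}$ in
\[
\frac{1}{|Aut_U(\lambda)|}\prod_{\phi\neq z, z-1}\Bigl[\sum_{\mu}\frac{u^{\deg(\phi)|\mu|}}{|Aut_U(\mu)|_{q\to \pm q^{\deg(\phi)}}}\Bigr],
\]
the product ranging over $F$-orbits of monic non-constant polynomials over $\overline{\F_q}$ with the appropriate substitution depending on the orbit type, as in \cite{F2}. Following the $GL$ proof, multiply and divide by $\sum_{\mu} u^{|\mu|}/|Aut_U(\mu)|$ to reinsert the $z-1$ factor, and use that evaluating the full unitary cycle index at all variables equal to one yields $1/(1-u)$. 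This reduces $\Lambda_{U,z-1,n}(\lambda)$ to the coefficient of $u^{n-m}$ in $\frac{1}{|Aut_U(\lambda)|}\cdot\frac{\prod_i(1+u/(-q)^i)}{1-u}$, and part (1) of Lemma \ref{eulU} then gives $\Lambda_{U,z-1,n}(\lambda) = \frac{1}{|Aut_U(\lambda)|}\sum_{j=0}^{n-m}\frac{(-1)^{\binom{j+1}{2}}}{(q^j-(-1)^j)\cdots(q+1)}$.

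Combining with $P_U(\lambda) = \prod_i(1+1/(-q)^i)/|Aut_U(\lambda)|$, the absolute value $|P_U(\lambda)-\Lambda_{U,z-1,n}(\lambda)|$ factors the $1/|Aut_U(\lambda)|$ outside a quantity depending only on $m=|\lambda|$, and summing over $|\lambda|=m$ pulls out exactly the same factor $q^{\binom{m}{2}}/((q^m-(-1)^m)\cdots(q+1))$ computed in the tail step. This yields the second double sum in the proposition. The main obstacle is ensuring that the unitary cycle index really factorizes as stated above and evaluates to $1/(1-u)$ at all variables equal to one; this is a standard consequence of the Wall parametrization of unitary conjugacy classes and the formula for centralizer orders used in \cite{F0} and \cite{F2}, but it is the only input that is not already recorded as a lemma in this section.
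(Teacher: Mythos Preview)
Your proposal is correct and follows essentially the same approach as the paper's proof. The only cosmetic difference is that you spell out the intermediate product form of the unitary cycle index (mirroring the $GL$ argument literally) before collapsing it, whereas the paper skips this step and directly asserts that the part of the unitary cycle index corresponding to polynomials other than $z-1$ equals $(1-u)^{-1}\big/\sum_{\lambda} u^{|\lambda|}/|Aut_U(\lambda)|$; both routes land on the same coefficient extraction and invoke Lemmas \ref{stoU} and \ref{eulU} identically.
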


\begin{proof} First we consider the contribution to the total variation distance coming from $\lambda$ of size $m>n$. Since $\Lambda_{U,z-1,n}(\lambda)=0$ for such $\lambda$, the contribution is \[ \frac{1}{2} \sum_{|\lambda|=m>n} \frac{\prod_i (1+1/(-q)^i)}{|Aut_U(\lambda)|}. \] By Lemma \ref{stoU} and part 2 of Lemma \ref{eulU}, this is equal to
\[ \frac{1}{2} \sum_{m>n} \frac{q^{{m \choose 2}}}{(q^m-(-1)^m) \cdots (q+1)} \prod_i (1+1/(-q)^i).\]

Next consider the probability that $\Lambda_{U,z-1,n}$ associates to $\lambda$ when $|\lambda|=m \leq n$. By the cycle
index of the unitary groups \cite{F1}, this probability is equal to the coefficient of $u^{n-m}$ in
\[ \frac{1}{|Aut_U(\lambda)|} \frac{(1-u)^{-1}}{\sum_{\lambda} \frac{u^{|\lambda|}}{|Aut_U(\lambda)|}}. \] Indeed, \[ \frac{(1-u)^{-1}}{\sum_{\lambda} \frac{u^{|\lambda|}}{|Aut_U(\lambda)|}} \] is the part of the cycle index of the unitary groups corresponding to polynomials other than $z-1$. By Lemma \ref{stoU}, it follows that $\Lambda_{U,z-1,n}(\lambda)$ is equal to the coefficient of $u^{n-m}$ in \[ \frac{1}{|Aut_U(\lambda)|} \frac{\prod_i (1+u/(-q)^i)}{1-u},\] and hence equal to
\[ \frac{1}{|Aut_U(\lambda)|} \sum_{j=0}^{n-m} Coef. \ u^j \ in \ \prod_i (1+u/(-q)^i). \]
By part 1 of Lemma \ref{eulU}, this is equal to \[ \frac{1}{|Aut_U(\lambda)|} \sum_{j=0}^{n-m} \frac{(-1)^{{j+1 \choose 2}}}{(q^j-(-1)^j) \cdots (q+1)}.\]

Thus the contribution to $||P_{U} - \Lambda_{U,z-1,n}||_{TV}$ coming from $\lambda$ with $|\lambda|=m \leq n$ is
\[ \frac{1}{2} \sum_{m=0}^n \sum_{|\lambda|=m} \frac{1}{|Aut_U(\lambda)|} \left| \prod_i (1+1/(-q)^i) - \sum_{j=0}^{n-m}
\frac{(-1)^{{j+1 \choose 2}}}{(q^j-(-1)^j) \cdots (q+1)} \right|.\] By Lemma \ref{stoU} and part 2 of Lemma \ref{eulU},
\begin{eqnarray*}
\sum_{|\lambda|=m} \frac{1}{|Aut_U(\lambda)|} & = & Coef. \ u^m \ in \ \prod_i (1+u/(-q)^i)^{-1} \\
& = & \frac{q^{{m \choose 2}}}{(q^m-(-1)^m) \cdots (q+1)}.
\end{eqnarray*}
Thus the contribution to $||P_{U} - \Lambda_{U,z-1,n}||_{TV}$ from $\lambda$ with $|\lambda|=m \leq n$ is
\begin{eqnarray*}
& & \frac{1}{2} \sum_{m=0}^n \frac{q^{{m \choose 2}}}{(q^m-(-1)^m) \cdots (q+1)} \\
& & \cdot \left| \prod_i (1+1/(-q)^i) - \sum_{j=0}^{n-m} \frac{(-1)^{{j+1 \choose 2}}}{(q^j-(-1)^j) \cdots (q+1)} \right|,
\end{eqnarray*} and the proof is complete.
\end{proof}

Next we prove the main result of this section.

\begin{theorem} \label{Ubound} For $n \geq 1$,
\[ \frac{1}{6 q^{n+1}} \leq ||P_{U} - \Lambda_{U,z-1,n}||_{TV}  \leq \frac{3}{q^{n+1}}.\]
\end{theorem}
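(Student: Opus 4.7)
The plan is to follow the blueprint of the proof of Theorem \ref{glbound}: start from the explicit formula in Proposition \ref{explicitU}, bound its two sums separately, and use the same telescoping via Lemma \ref{eulU} that made the $GL$ case work. A qualitative reason the constant here is much smaller than the $14$ of Theorem \ref{glbound} is that the unitary analog $\prod_{i \geq 1}(1+1/(-q)^i)^{-1}$ is substantially smaller than $\prod_{i \geq 1}(1-1/q^i)^{-1}$ (roughly $1.8$ rather than $3.5$ for $q \geq 2$), and the squared version of this constant drives the $GL$ upper bound.

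For the lower bound I would keep only the $m=n+1$ term of the first sum in Proposition \ref{explicitU}. Using the identity $(q^{n+1}-(-1)^{n+1})\cdots (q+1)=q^{{n+2 \choose 2}}\prod_{k=1}^{n+1}(1+(-1/q)^k)$ together with the factorization of $\prod_{i \geq 1}(1+1/(-q)^i)$, the $m=n+1$ term simplifies to $\frac{1}{2q^{n+1}}\prod_{k \geq n+2}(1+(-1/q)^k)$. For $n \geq 1$ and $q \geq 2$ the trailing product is close to $1$, and a short explicit estimate in the worst case $(q,n)=(2,1)$ (keeping the first few factors of $\prod_{k \geq 3}(1+(-1/2)^k)$) already shows it is $\geq 1/3$. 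This yields the claimed $\tfrac{1}{6q^{n+1}}$ lower bound with room to spare.

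For the upper bound I would handle the two sums of Proposition \ref{explicitU} separately. The telescoping above rewrites the first sum as $\frac{1}{2}\sum_{m > n} q^{-m}\prod_{k \geq m+1}(1+(-1/q)^k)$; since the tail product is bounded above by a small uniform constant for $q \geq 2$ and $m \geq n+1 \geq 2$, the geometric series in $q^{-m}$ contributes $O(1/q^{n+1})$. For the second sum I would use part 1 of Lemma \ref{eulU} to identify the bracketed difference as a tail $\sum_{j > n-m}\frac{(-1)^{{j+1 \choose 2}}}{(q^j-(-1)^j)\cdots (q+1)}$, and bound its absolute value by a small multiple of its leading term $\frac{1}{(q^{n-m+1}-(-1)^{n-m+1})\cdots (q+1)}$. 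The sign pattern $+,-,-,+,+,-,-,\ldots$ is only pair-alternating, but grouping consecutive equal-sign terms and noting that $a_{j+1}/a_j \leq 2/q^{j+1}$ still yields this bound. Substituting, using $\prod_{k=1}^m (1+(-1/q)^k) \geq 1/2$ in both the cycle-index factor and the tail denominator, and summing $\sum_{m=0}^n q^{-(m+{n-m+2 \choose 2})} \leq \frac{1}{q^{n+1}(1-1/q)}$ gives a second contribution of $O(1/q^{n+1})$, and combining them produces the stated upper bound.

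The main obstacle is tracking constants tightly enough to land at $3/q^{n+1}$ rather than a larger multiple. Because the tail in Lemma \ref{eulU} is only pair-alternating, the absolute-value bound on the tail picks up an extra factor of roughly $3/2$ compared with the strictly alternating $GL$ tail; this has to be compensated by sharper estimates on $\prod_{k=1}^m (1+(-1/q)^k)^{-1}$ (whose smallest nontrivial value, attained at $m=1$, is only $(1-1/q)^{-1}$), and in particular by treating the $m=n$ term of the second sum separately with the sharper bound $|T_n| \leq a_1+a_2$ rather than the cruder $2a_1$. This careful bookkeeping, together with the fact that $\prod_{i \geq 1}(1+1/(-q)^i) < 1$, is what allows the final constants to remain small.
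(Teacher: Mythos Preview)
Your strategy matches the paper's proof exactly: take the $m=n+1$ term of Proposition \ref{explicitU} for the lower bound, and bound the two sums separately for the upper bound, using Lemma \ref{eulU}(1) to identify the bracketed difference as a tail.

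However, there is a sign slip in your telescoping. One has $q^k-(-1)^k=q^k\bigl(1-(-1/q)^k\bigr)$, not $q^k\bigl(1+(-1/q)^k\bigr)$, so $(q^{n+1}-(-1)^{n+1})\cdots(q+1)=q^{\binom{n+2}{2}}\prod_{k=1}^{n+1}\bigl(1-(-1/q)^k\bigr)$, whereas $\prod_{i\ge1}(1+1/(-q)^i)=\prod_{i\ge1}\bigl(1+(-1/q)^i\bigr)$. These products do \emph{not} cancel, so the $m=n+1$ term does not simplify to $\tfrac{1}{2q^{n+1}}\prod_{k\ge n+2}(1+(-1/q)^k)$. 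The paper handles this by bounding the ratio directly: since the finite product $(1+1/q)(1-1/q^2)\cdots$ pairs to values $\ge1$, the ratio is at least $(1-1/q)/(1+1/q)\ge\tfrac13$, giving $\tfrac{1}{6q^{n+1}}$.

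Your upper-bound plan also works but is more elaborate than needed. The paper does not analyze the pair-alternating sign pattern at all: it simply uses $(q^j-(-1)^j)\cdots(q+1)\ge q^{\binom{j+1}{2}}$ (again because the finite product $(1+1/q)(1-1/q^2)\cdots\ge1$) to get $|\text{tail}|\le\sum_{j\ge n-m+1}q^{-\binom{j+1}{2}}\le 2/q^{\binom{n-m+2}{2}}$, and the same inequality gives $\frac{q^{\binom{m}{2}}}{(q^m-(-1)^m)\cdots(q+1)}\le q^{-m}$ with constant $1$ rather than $2$. This yields $1+2=3$ for the upper constant without any separate treatment of $m=n$.
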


\begin{proof} To start we examine the lower bound. By considering the $m=n+1$ term in Proposition \ref{explicitU}, it follows that
\begin{eqnarray*}
& & ||P_{U} - \Lambda_{U,z-1,n}||_{TV} \\
& \geq & \frac{1}{2} \frac{q^{{n+1 \choose 2}}}{q^{{n+2 \choose 2}}} \frac{\prod_i (1+1/(-q)^i)}{(1+1/q)(1-1/q^2) \cdots (1-(-1)^{n+1}/q^{n+1})} \\
& \geq & \frac{1}{2} \frac{q^{{n+1 \choose 2}}}{q^{{n+2 \choose 2}}} \frac{(1-1/q)}{(1+1/q)} \\
& = & \frac{(1-1/q)}{2(1+1/q)} \frac{1}{q^{n+1}} \\
& \geq & \frac{1}{6 q^{n+1}}.
\end{eqnarray*}

Next we treat the upper bound. First note that
\begin{eqnarray*}
& & \frac{1}{2} \sum_{m>n} \frac{q^{{m \choose 2}}}{(q^m-(-1)^m) \cdots (q+1)} \prod_i (1+1/(-q)^i) \\
& = & \frac{1}{2} \sum_{m>n} \frac{q^{{m \choose 2}}}{q^{{m+1 \choose 2}}} \frac{\prod_i (1+1/(-q)^i)}{(1+1/q) \cdots (1-(-1)^m/q^m)} \\
& \leq & \frac{1}{2} \sum_{m>n} \frac{q^{{m \choose 2}}}{q^{{m+1 \choose 2}}} \\
& \leq & \frac{1}{q^{n+1}}.
\end{eqnarray*}

Next note by part 1 of Lemma \ref{eulU} that
\begin{eqnarray*}
& & \left| \prod_i (1+1/(-q)^i) - \sum_{j=0}^{n-m} \frac{(-1)^{{j+1 \choose 2}}}{(q^j-(-1)^j) \cdots (q+1)} \right| \\
& = & \left| \sum_{j \geq n-m+1} \frac{(-1)^{{j+1 \choose 2}}}{(q^j-(-1)^j) \cdots (q+1)} \right| \\
& \leq & \sum_{j \geq n-m+1} \frac{1}{q^{{j+1 \choose 2}}} \\
& \leq & \frac{2}{q^{{n-m+2 \choose 2}}}.
\end{eqnarray*} Thus
\begin{eqnarray*}
& & \frac{1}{2} \sum_{m=0}^n \frac{q^{{m \choose 2}}}{(q^m-(-1)^m) \cdots (q+1)} \\
& & \cdot \left| \prod_i (1+1/(-q)^i) - \sum_{j=0}^{n-m} \frac{(-1)^{{j+1 \choose 2}}}{(q^j-(-1)^j) \cdots (q+1)} \right| \\
& \leq & \frac{1}{2} \sum_{m=0}^n \frac{q^{{m \choose 2}}}{q^{{m+1 \choose 2}} (1+1/q) \cdots (1-(-1)^m/q^m)} \frac{2}{q^{{n-m+2 \choose 2}}} \\
& \leq & \frac{1}{2} \sum_{m=0}^n \frac{1}{q^m} \frac{2}{q^{{n-m+2 \choose 2}}} \\
& \leq & \frac{1}{q^{n+1} (1-1/q)} \\
& \leq & \frac{2}{q^{n+1}}.
\end{eqnarray*}

Combining the bounds in the two previous paragraphs completes the proof.
\end{proof}

\section{Symplectic groups} \label{Sp}

Next we define a symplectic analog $P_{Sp}$ of the Cohen-Lenstra measure on the set of all partitions of all
non-negative integers in which the odd parts occur with even multiplicity. A formula for the measure $P_{Sp}$ is:
\[ P_{Sp}(\lambda) = \prod_{i \geq 1} (1-1/q^{2i-1}) \frac{1}{|Aut_{Sp}(\lambda)|}, \]
where $|Aut_{Sp}(\lambda)|$ is defined by the formula
\[ |Aut_{Sp}(\lambda)| = q^{n(\lambda)+ \frac{|\lambda|}{2} + \frac{o(\lambda)}{2}} \prod_i (1-1/q^2)(1-1/q^4) \cdots
    (1-1/q^{2 \lfloor \frac{m_i(\lambda)}{2} \rfloor}).\] Here, $m_i(\lambda)$ denotes the multiplicity of $i$ in the partition $\lambda$, $o(\lambda)$ denotes the number of odd parts of $\lambda$, and $n(\lambda) = \sum_{i} {\lambda_i' \choose 2}.$
\[ \]
{\it Remark:} The measure $P_{Sp}$ is the special case $(u=1)$ of a measure studied in \cite{F3}, which chooses $\lambda$ with probability
\[ \prod_{i \geq 1} (1-u^2/q^{2i-1}) \cdot \frac{u^{|\lambda|}}{|Aut_{Sp}(\lambda)|} .\]

\[ \]

To define the probability measure $\Lambda_{Sp,z-1,n}$, we use, as in the $GL$ and $U$ cases, the theory of rational canonical forms. Given an element
$g \in Sp(2n,q)$, there is a partition $\lambda_{z-1}(g)$ of size at most $2n$ associated to the polynomial $z-1$. When $g$ is chosen uniformly at random from $Sp(2n,q)$, we let $\Lambda_{Sp,z-1,n}$ denote the corresponding measure on partitions. From \cite{F3} (in odd characteristic) and \cite{FG} (in even characteristic), it is known that as $n \rightarrow \infty$, the measure
$\Lambda_{Sp,z-1,n}$ converges to the measure $P_{Sp}$. In fact the main result of this section is that
\[ \frac{.2}{q^{n+1}} \leq  ||P_{Sp} - \Lambda_{Sp,z-1,n} ||_{TV}  \leq \frac{2.5}{q^{n+1}}  .\]

Lemma \ref{eulSp} is obtained by replacing $u$ by $u^2 q$ and $q$ by $q^2$ in Lemma \ref{eul}.

\begin{lemma} \label{eulSp}
\begin{enumerate}
\item $\prod_{i \geq 1} (1-u^2/q^{2i-1}) = \sum_{j \geq 0} \frac{(-1)^j u^{2j} q^j}{(q^{2j}-1) \cdots (q^2-1)}$.

\item $\prod_{i \geq 1} (1-u^2/q^{2i-1})^{-1} = \sum_{j \geq 0} \frac{u^{2j} q^{j^2}}{(q^{2j}-1) \cdots (q^2-1)}$.
\end{enumerate}
\end{lemma}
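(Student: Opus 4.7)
The plan is to verify directly that the substitution claimed in the sentence preceding the lemma transforms the two identities of Lemma \ref{eul} into the two identities of Lemma \ref{eulSp}. Since Lemma \ref{eul} is a formal identity of power series in $u$ with coefficients in the field of rational functions in $q$, the substitution $u \mapsto u^{2} q$ and $q \mapsto q^{2}$ is legitimate, so the only real work is to check that each side matches the stated formula after the substitution.

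For part (1), on the left-hand side the factor $1 - u/q^{i}$ becomes $1 - u^{2}q/q^{2i} = 1 - u^{2}/q^{2i-1}$, giving the product $\prod_{i \geq 1}(1 - u^{2}/q^{2i-1})$. On the right-hand side, the denominator $(q^{j}-1)\cdots(q-1)$ becomes $(q^{2j}-1)\cdots(q^{2}-1)$, and the numerator $(-u)^{j}$ becomes $(-u^{2}q)^{j} = (-1)^{j} u^{2j} q^{j}$, as claimed.

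For part (2), the left-hand side is immediate from the same substitution. On the right-hand side, the denominator is handled as in part (1), and the numerator $u^{j} q^{\binom{j}{2}}$ becomes $(u^{2}q)^{j} (q^{2})^{\binom{j}{2}} = u^{2j} q^{j} q^{j(j-1)} = u^{2j} q^{j^{2}}$, matching the stated formula.

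There is essentially no obstacle beyond careful bookkeeping of exponents; the only point that might warrant a sentence of comment is the exponent collapse $j + j(j-1) = j^{2}$ in part (2), since everything else is literal substitution into an already-proved identity.
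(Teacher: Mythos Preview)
Your proof is correct and is exactly the approach the paper indicates: the paper gives no separate proof beyond the sentence ``obtained by replacing $u$ by $u^2 q$ and $q$ by $q^2$ in Lemma~\ref{eul},'' and you have simply written out that substitution in full, including the exponent check $j + j(j-1) = j^2$ in part~(2).
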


The following lemma is from \cite{F3}.

\begin{lemma} \label{stoSp}
\[ \sum_{\lambda} \frac{u^{|\lambda|}}{|Aut_{Sp}(\lambda)|} = \prod_{i \geq 1} (1-u^2/q^{2i-1})^{-1} .\]
\end{lemma}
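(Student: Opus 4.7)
My plan is to verify the identity by matching coefficients of $u^{2j}$ on both sides. By part (2) of Lemma~\ref{eulSp}, the right-hand side equals $\sum_{j\geq 0} u^{2j} q^{j^2} / \prod_{k=1}^j (q^{2k}-1)$, while the left-hand side has only even powers of $u$, since $|\lambda|=\sum_i i\, m_i$ is automatically even whenever every $m_i$ with $i$ odd is even. So it suffices to prove for each $j\geq 0$ the identity
\[
S_j := \sum_{\substack{|\lambda|=2j\\ m_i(\lambda)\in 2\mathbb{Z}\ \text{for}\ i\ \text{odd}}} \frac{1}{|Aut_{Sp}(\lambda)|} = \frac{q^{j^2}}{\prod_{k=1}^j (q^{2k}-1)}.
\]

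I would proceed by induction on $j$. The base case $j=0$ is immediate, giving $S_0=1$. For the inductive step, I plan to peel off either the largest column $\lambda'_1$ of $\lambda$ or the smallest part of $\lambda$, and track the resulting change in $n(\lambda)$, $|\lambda|$, $o(\lambda)$, and the Pochhammer product making up $|Aut_{Sp}(\lambda)|$. Each peeling changes the multiplicity profile in a controlled way, and after summing over the peeled parameter (the column length, or the multiplicity of the smallest part), the recurrence should collapse to the desired closed form via a $q$-binomial identity.

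The main obstacle is the coupling of multiplicities introduced by $n(\lambda) = \sum_k \binom{\lambda'_k}{2}$: after peeling, one must carefully account for how $n$ changes, and this contribution depends on neighboring columns. Together with the parity restriction on odd multiplicities---which forces a case split based on whether the removed part is odd or even---this makes the bookkeeping delicate. An alternative route is to expand the left-hand side directly in terms of the multiplicity sequence $(m_i)$, making the substitution $m_{2k-1} = 2 n_k$ to remove the parity constraint, and then factor the resulting multiple sum via successive applications of Euler's identity (Lemma~\ref{eulSp}). Either approach reduces the claim to a finite $q$-series identity handled by standard $q$-binomial manipulations.
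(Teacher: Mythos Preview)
The paper does not prove this lemma at all; it simply quotes it from \cite{F3}. So there is no in-paper argument to compare against, and what you have submitted is not a proof but a proof \emph{plan}. That distinction matters here, because the hard step is precisely the one you flag and then leave undone.

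Both of your proposed routes hinge on decoupling the exponent $n(\lambda)+\tfrac{|\lambda|}{2}+\tfrac{o(\lambda)}{2}$ into contributions that can be summed independently over the multiplicities $m_i$. You correctly identify that $n(\lambda)=\sum_k\binom{\lambda'_k}{2}$ couples adjacent multiplicities, and you say the resulting bookkeeping is ``delicate'' --- but you never do it. The sentence ``Either approach reduces the claim to a finite $q$-series identity handled by standard $q$-binomial manipulations'' is an assertion, not an argument: you have not exhibited the recursion that results from peeling a column, nor shown that the multiple sum over $(m_i)$ actually factors after the substitution $m_{2k-1}=2n_k$. Until one of these is written out, there is no proof. (For orientation: the exponent can be rewritten using $n(\lambda)+\tfrac{|\lambda|}{2}=\tfrac{1}{2}\sum_i(\lambda'_i)^2$, and the direct route then requires organizing the sum over the decreasing sequence $(\lambda'_i)$ rather than over $(m_i)$; this is where the actual work lies, and it is not a one-line consequence of Lemma~\ref{eulSp}.) If you want a self-contained argument, carry out the column-peeling recursion explicitly or consult the original derivation in \cite{F3}.
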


Now we give an explicit expression for $||P_{Sp} - \Lambda_{Sp,z-1,n} ||_{TV}$.

\begin{prop} \label{explicitSp}
\begin{eqnarray*}
& & ||P_{Sp} - \Lambda_{Sp,z-1,n} ||_{TV} \\
& = & \frac{1}{2} \sum_{m>n} \frac{q^{m^2}}{(q^{2m}-1) \cdots (q^2-1)} \prod_i (1-1/q^{2i-1}) \\
& & + \frac{1}{2} \sum_{m=0}^n \frac{q^{m^2}}{(q^{2m}-1) \cdots (q^2-1)} \\
& & \cdot \left|\prod_i (1-1/q^{2i-1}) - \sum_{j=0}^{n-m} \frac{(-1)^j q^j}{(q^{2j}-1) \cdots (q^2-1)} \right|.
\end{eqnarray*}
\end{prop}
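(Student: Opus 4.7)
The plan is to follow the template of Propositions \ref{explicit} and \ref{explicitU}, adapting the computations to the symplectic setting. I would first split the total variation sum into contributions from partitions $\lambda$ with $|\lambda|>2n$ (which lie outside the support of $\Lambda_{Sp,z-1,n}$) and those with $|\lambda|\le 2n$. Since $P_{Sp}$ is supported on partitions whose odd parts occur with even multiplicity, $|\lambda|$ is always even, so I would parametrize such $\lambda$ by $m=|\lambda|/2$.

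For $m>n$ we have $\Lambda_{Sp,z-1,n}(\lambda)=0$, so the contribution equals $\frac{1}{2}\sum_{|\lambda|=2m}P_{Sp}(\lambda)$. Combining Lemma \ref{stoSp} with part 2 of Lemma \ref{eulSp} identifies $\sum_{|\lambda|=2m}\frac{1}{|Aut_{Sp}(\lambda)|}$ as the coefficient of $u^{2m}$ in $\prod_i(1-u^2/q^{2i-1})^{-1}$, which equals $\frac{q^{m^2}}{(q^{2m}-1)\cdots(q^2-1)}$. This yields the first sum in the claimed formula.

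For $m\le n$, I would invoke the cycle index of the symplectic groups from \cite{F3} (odd characteristic) and \cite{FG} (even characteristic). The decisive structural fact, parallel to the $GL$ and $U$ cases, is that the full cycle index summed over $n$ with weight $u^{2n}$ collapses, upon setting all variables to one, to $\sum_{n\ge 0}u^{2n}=(1-u^2)^{-1}$. Factoring out the $z-1$ piece, which by Lemma \ref{stoSp} equals $\prod_i(1-u^2/q^{2i-1})^{-1}$, shows that $\Lambda_{Sp,z-1,n}(\lambda)$ is the coefficient of $u^{2n-2m}$ in
\[ \frac{1}{|Aut_{Sp}(\lambda)|}\cdot\frac{\prod_i(1-u^2/q^{2i-1})}{1-u^2}. \]
Expanding $(1-u^2)^{-1}=\sum_{k\ge 0}u^{2k}$ and using part 1 of Lemma \ref{eulSp} to read off the coefficient of $u^{2j}$ in $\prod_i(1-u^2/q^{2i-1})$ as $\frac{(-1)^j q^j}{(q^{2j}-1)\cdots(q^2-1)}$, I would obtain
\[ \Lambda_{Sp,z-1,n}(\lambda)=\frac{1}{|Aut_{Sp}(\lambda)|}\sum_{j=0}^{n-m}\frac{(-1)^j q^j}{(q^{2j}-1)\cdots(q^2-1)}. \]
Summing $\frac{1}{2}|P_{Sp}(\lambda)-\Lambda_{Sp,z-1,n}(\lambda)|$ over $|\lambda|=2m$, pulling out the factor depending only on $m$, and once more invoking $\sum_{|\lambda|=2m}\frac{1}{|Aut_{Sp}(\lambda)|}=\frac{q^{m^2}}{(q^{2m}-1)\cdots(q^2-1)}$ produces the second sum.

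The main obstacle is justifying the cycle index factorization uniformly across both characteristics: specifically, that the non-$z-1$ factor of the symplectic cycle index, after all of its remaining variables are set to one, reduces to $(1-u^2)^{-1}$ divided by $\sum_{\mu}u^{|\mu|}/|Aut_{Sp}(\mu)|$. This is essentially a citation to \cite{F3} and \cite{FG}; once in hand, the remainder of the argument is a direct algebraic manipulation parallel to the $GL$ and unitary proofs, and the main bookkeeping is simply tracking the factor of two in $|\lambda|=2m$ throughout.
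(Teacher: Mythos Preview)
Your proposal is correct and follows essentially the same approach as the paper: split the total variation sum by $m=|\lambda|/2$, use Lemmas \ref{stoSp} and \ref{eulSp} to evaluate $\sum_{|\lambda|=2m}1/|Aut_{Sp}(\lambda)|$, and invoke the symplectic cycle index to identify $\Lambda_{Sp,z-1,n}(\lambda)$ as the coefficient of $u^{2n-2m}$ in $\frac{1}{|Aut_{Sp}(\lambda)|}\cdot\frac{\prod_i(1-u^2/q^{2i-1})}{1-u^2}$. The only cosmetic difference is that the paper substitutes $u^2\mapsto u$ before extracting coefficients, whereas you expand $(1-u^2)^{-1}$ directly; the computations are otherwise identical.
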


\begin{proof} To begin we consider the contribution to the total variation distance coming from $\lambda$ of size
$2m>2n$. Since $\Lambda_{Sp,z-1,n}(\lambda)=0$ for such $\lambda$, the contribution is
\[ \frac{1}{2} \sum_{|\lambda|=2m>2n} \frac{\prod_i (1-1/q^{2i-1})}{|Aut_{Sp}(\lambda)|}. \]
By Lemma \ref{stoSp} and part 2 of Lemma \ref{eulSp}, this is equal to
\[ \frac{1}{2} \sum_{m>n} \frac{q^{m^2}}{(q^{2m}-1) \cdots (q^2-1)} \prod_i (1-1/q^{2i-1}) .\]

Next consider the probability that $\Lambda_{Sp,z-1,n}$ assigns to $\lambda$ when $|\lambda|=2m \leq 2n$.
By the cycle index of the symplectic groups, this probability is equal to the coefficient of $u^{2n-2m}$ in
\[ \frac{1}{|Aut_{Sp}(\lambda)|} \frac{(1-u^2)^{-1}}{\sum_{\lambda} \frac{u^{|\lambda|}}{|Aut_{Sp}(\lambda)|}}. \]
Indeed, \[ \frac{(1-u^2)^{-1}}{\sum_{\lambda} \frac{u^{|\lambda|}}{|Aut_{Sp}(\lambda)|}} \] is the part
of the cycle index of the symplectic groups corresponding to polynomials other than $z-1$. By
Lemma \ref{stoSp}, it follows that $\Lambda_{Sp,z-1,n}(\lambda)$ is equal to the coefficient of $u^{n-m}$
in \[ \frac{1}{|Aut_{Sp}(\lambda)|} \frac{\prod_i (1-u/q^{2i-1})}{1-u},\] and thus equal to
\[ \frac{1}{|Aut_{Sp}(\lambda)|} \sum_{j=0}^{n-m} Coef. \ u^j \ in \ \prod_i (1-u/q^{2i-1}).\]
By part 1 of Lemma \ref{eulSp}, this is equal to
\[ \frac{1}{|Aut_{Sp}(\lambda)|} \sum_{j=0}^{n-m} \frac{(-1)^j q^j}{(q^{2j}-1) \cdots (q^2-1)} .\]

Thus the contribution to $||P_{Sp} - \Lambda_{Sp,z-1,n} ||_{TV}$ coming from $\lambda$ with $|\lambda|=2m \leq 2n$ is
\[ \frac{1}{2} \sum_{m=0}^n \sum_{|\lambda|=2m} \frac{1}{|Aut_{Sp}(\lambda)|} \left|\prod_i (1-1/q^{2i-1}) - \sum_{j=0}^{n-m}
\frac{(-1)^j q^j}{(q^{2j}-1) \cdots (q^2-1)} \right| .\]
By Lemma \ref{stoSp} and part 2 of Lemma \ref{eulSp},
\begin{eqnarray*}
\sum_{|\lambda|=2m} \frac{1}{|Aut_{Sp}(\lambda)|} & = & Coef. \ u^{2m} \ in \ \prod_i (1-u^2/q^{2i-1})^{-1} \\
& = & \frac{q^{m^2}}{(q^{2m}-1) \cdots (q^2-1)}.
\end{eqnarray*} Thus the contribution to $||P_{Sp} - \Lambda_{Sp,z-1,n} ||_{TV}$ from $\lambda$ with $|\lambda|=2m \leq 2n$ is
\begin{eqnarray*}
& & \frac{1}{2} \sum_{m=0}^n \frac{q^{m^2}}{(q^{2m}-1) \cdots (q^2-1)} \\
& & \cdot \left|\prod_i (1-1/q^{2i-1}) - \sum_{j=0}^{n-m} \frac{(-1)^j q^j}{(q^{2j}-1) \cdots (q^2-1)} \right|,
\end{eqnarray*} which completes the proof.
\end{proof}

Now we prove the main result of this section.

\begin{theorem} \label{spbound} For $n \geq 1$,
\[ \frac{.2}{q^{n+1}} \leq  ||P_{Sp} - \Lambda_{Sp,z-1,n} ||_{TV}  \leq \frac{2.5}{q^{n+1}}  .\]
\end{theorem}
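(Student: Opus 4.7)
The plan is to mirror the strategy used in Theorems \ref{glbound} and \ref{Ubound}, starting from the explicit formula in Proposition \ref{explicitSp} and extracting sharp bounds on each of the two sums appearing there. The key algebraic simplification I would rely on throughout is
\[ (q^{2m}-1)(q^{2m-2}-1)\cdots(q^2-1) = q^{m(m+1)} \prod_{i=1}^{m}(1-1/q^{2i}), \]
which, combined with the identity $(n+1)^2 = (n+1)(n+2) - (n+1)$, makes the factor $q^{n+1}$ emerge naturally.

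For the lower bound, I would retain only the $m = n+1$ term of the first sum in Proposition \ref{explicitSp}. Its prefactor simplifies to
\[ \frac{1}{2 q^{n+1}} \cdot \frac{\prod_{i\geq 1}(1-1/q^{2i-1})}{\prod_{i=1}^{n+1}(1-1/q^{2i})}. \]
I then need to check that the displayed ratio is at least $0.4$ for every $q \geq 2$ and $n \geq 1$. The worst case is $q = 2$, and I would estimate $\prod_{i\geq 1}(1-1/2^{2i-1})$ from below and $\prod_{i=1}^{n+1}(1-1/2^{2i})$ from above using a few partial products from Euler's pentagonal number theorem, exactly as in the $GL$ proof. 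This gives the desired $0.2/q^{n+1}$.

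For the upper bound I split the total variation expression into its two natural pieces. The tail sum $m > n$ becomes $\frac{1}{2}\sum_{m>n} \frac{1}{q^m}\cdot \frac{\prod_i(1-1/q^{2i-1})}{\prod_{i=1}^m(1-1/q^{2i})}$ after the simplification above; since the ratio of products is bounded by a modest constant (near $0.6$ for $q=2$, smaller for larger $q$), this telescopes as a geometric series in $1/q^m$ and contributes at most roughly $1/q^{n+1}$. For the main sum $0\leq m\leq n$, I would apply part (1) of Lemma \ref{eulSp} to rewrite the absolute value as $\left|\sum_{j\geq n-m+1} (-1)^j q^j/((q^{2j}-1)\cdots(q^2-1))\right|$, and then bound it by the sum $\sum_{j\geq n-m+1} 1/(q^{j^2}\prod_{i=1}^j(1-1/q^{2i}))$. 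Because $q^{j^2}$ grows superexponentially, this tail is comparable to its leading ($j = n-m+1$) term, yielding a bound of order $C/q^{(n-m+1)^2}$. Multiplying by the prefactor $1/(q^m \prod_{i=1}^m(1-1/q^{2i}))$ and summing, I would note that the exponent $m + (n-m+1)^2$ is minimized at $m = n$ with value $n+1$, and the next value ($m = n-1$) already gives exponent $n+3$, so the tail in $m$ converges very rapidly to a geometric series in $1/q$. Keeping track of constants, the two pieces combine to something safely under $2.5/q^{n+1}$.

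The main obstacle will be pinning down the numerical constants precisely enough: both $0.2$ and $2.5$ are reasonably tight, so care is required in bounding the Euler products $\prod_i(1-1/q^{2i-1})$ and $\prod_i(1-1/q^{2i})^{-1}$ at the worst case $q = 2$. As in the $GL$ case, explicit partial sums from Euler's pentagonal number theorem, together with the fact that $\prod_i(1-1/q^{2i})^{-1}$ is at most a small constant for $q \geq 2$, should be enough to seal both the lower and upper bounds without any serious analytic difficulty.
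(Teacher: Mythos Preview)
Your proposal is correct and follows essentially the same route as the paper's proof: both take the $m=n+1$ term for the lower bound (reducing to $\prod_i(1-1/q^{2i-1}) \geq 0.4$ at $q=2$), and for the upper bound both split into the tail $m>n$ (geometric series giving $\leq 1/q^{n+1}$) and the range $0\leq m\leq n$ (bounded via $\prod_i(1-1/q^{2i})^{-1}\leq 1.5$ to give $\leq 1.5/q^{n+1}$). The only cosmetic difference is that the paper bounds the absolute value by the single first omitted term of the alternating series from Lemma~\ref{eulSp}(1), whereas you bound by the full tail sum and then observe it is dominated by its leading term; both yield the same $1/q^{(n-m+1)^2}$ estimate.
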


\begin{proof} To begin we treat the lower bound. By looking at the $m=n+1$ term in Proposition \ref{explicitSp},
it follows that
\begin{eqnarray*}
||P_{Sp} - \Lambda_{Sp,z-1,n} ||_{TV} & \geq & \frac{1}{2} \frac{q^{(n+1)^2}}{q^{2 {n+2 \choose 2}}} \prod_i (1-1/q^{2i-1}) \\
& \geq & \frac{.2}{q^{n+1}}.
\end{eqnarray*}

For the upper bound, first note that
\begin{eqnarray*}
& & \frac{1}{2} \sum_{m>n} \frac{q^{m^2}} {(q^{2m}-1) \cdots (q^2-1)} \prod_i (1-1/q^{2i-1}) \\
& = & \frac{1}{2} \sum_{m>n} \frac{1}{q^m} \frac{\prod_i (1-1/q^{2i-1})}{(1-1/q^2) \cdots (1-1/q^{2m})} \\
& \leq & \frac{1}{2} \sum_{m>n} \frac{1}{q^m} \\
& \leq & \frac{1}{q^{n+1}}.
\end{eqnarray*}

Next, note that by part 1 of Lemma \ref{eulSp} with $u=1$,
\begin{eqnarray*}
& & \frac{1}{2} \sum_{m=0}^n \frac{q^{m^2}}{(q^{2m}-1) \cdots (q^2-1)} \\
& & \cdot \left|\prod_i (1-1/q^{2i-1}) - \sum_{j=0}^{n-m} \frac{(-1)^j q^j}{(q^{2j}-1) \cdots (q^2-1)} \right| \\
& \leq & \frac{1}{2} \sum_{m=0}^n \frac{q^{m^2}}{(q^{2m}-1) \cdots (q^2-1)} \frac{q^{n-m+1}}{(q^{2(n-m+1)}-1) \cdots (q^2-1)}.
\end{eqnarray*} Since \[ \frac{1}{(1-1/q^2)(1-1/q^4) \cdots} \leq 1.5,\] the upper bound becomes
\[ \frac{(1.5)^2}{2} \sum_{m=0}^n \frac{1}{q^m} \frac{1}{q^{(n-m+1)^2}} \leq \frac{(1.5)^2}{2} \frac{1}{q^{n+1}(1-1/q^2)} \leq \frac{1.5}{q^{n+1}}.\]

Combining the bounds of the previous two paragraphs completes the proof.
\end{proof}

\section{Orthogonal groups} \label{O}

In treating the orthogonal groups it is necessary to separately consider the cases of odd and even characteristic. Subsection \ref{odd} treats odd characteristic,
and Subsection \ref{even} treats even characteristic.

\subsection{Odd characteristic} \label{odd}

Suppose throughout this subsection that the characteristic is odd. We define an orthogonal analog $P_O$ of the Cohen-Lenstra measure on the set of all partitions of all non-negative integers in which the even parts occur with even multiplicity. A formula for the measure $P_O$ is:

\[ P_O(\lambda)= \frac{\prod_{i \geq 1} (1-1/q^{2i-1})}{2 |Aut_O(\lambda)|} ,\] where $|Aut_O(\lambda)|$ is defined by the formula

\[ |Aut_O(\lambda)| = q^{n(\lambda)+ \frac{|\lambda|}{2} - \frac{o(\lambda)}{2}} \prod_i (1-1/q^2) (1-1/q^4) \cdots
    (1-1/q^{2 \lfloor \frac{m_i(\lambda)}{2} \rfloor}).\] Here, as earlier, $m_i(\lambda)$ denotes the multiplicity of $i$ in the partition $\lambda$, $o(\lambda)$ denotes the number of odd parts of $\lambda$, and $n(\lambda) = \sum_{i} {\lambda_i' \choose 2}$.

\[ \]

{\it Remark}: The measure $P_O$ is the special case $(u=1)$ of a measure studied in \cite{F3}, which chooses $\lambda$ with probability
\[ \frac{\prod_{i \geq 1} (1-u^2/q^{2i-1})}{(1+u)} \frac{u^{|\lambda|}}{|Aut_O(\lambda)|}. \]

\[ \]

To define the probability measure $\Lambda_{O,z-1,n}$, we use, as in the cases of other finite classical groups, the theory of rational canonical forms.
We choose an element $g$, with probability $1/2$ uniformly at random from $O^+(n,q)$ and with probability $1/2$ uniformly at random from $O^-(n,q)$. Then there is
a partition $\lambda_{z-1}(g)$ of size at most $n$ associated to the polynomial $z-1$. We let $\Lambda_{O,z-1,n}$ denote the corresponding measure on partitions.
From \cite{F3} it is known that as $n \rightarrow \infty$, the measure $\Lambda_{O,z-1,n}$ converges to the measure $P_O$. The main result of this section is a sharp error term for this convergence.

The following lemma is from \cite{F3}.

\begin{lemma} \label{stoO} Suppose that $q$ is odd. Then
\[ \sum_{\lambda} \frac{u^{|\lambda|}}{|Aut_O(\lambda)|} = \frac{1+u}{\prod_i (1-u^2/q^{2i-1})} .\]
\end{lemma}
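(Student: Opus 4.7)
My plan is to derive the identity via a direct generating-function computation. I would parametrize admissible partitions $\lambda$ (those with even parts of even multiplicity) by their conjugate sequence $a_i := \lambda'_i$. Admissibility translates to $a_1 \geq a_2 \geq \cdots \geq 0$ together with the parity constraint $a_{2i} \equiv a_{2i+1} \pmod{2}$ for each $i \geq 1$ (reflecting that $m_{2i} = a_{2i} - a_{2i+1}$ must be even). Under this substitution, the quadratic quantity $n(\lambda) = \sum_i \binom{a_i}{2}$ decouples cleanly across $i$, $|\lambda| = \sum_i a_i$, and the factor $\prod_i (1-q^{-2})(1-q^{-4}) \cdots (1-q^{-2 \lfloor m_i/2 \rfloor})$ with $m_i = a_i - a_{i+1}$ is local in pairs of adjacent $a_i$.

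Next I would iterate the summation over the $a_i$ from the largest index inward. The parity constraint is handled by writing $m_{2i} = 2k_i$ with $k_i \geq 0$, which makes the even-index sums unconstrained. Each resulting inner sum is a basic $q$-series evaluable by an Euler-type identity in the spirit of Lemma \ref{eulSp}, and the contributions collapse into a product form. The odd-index multiplicities $m_{2i-1}$ remain arbitrary; their contribution, combined with the sign flip from $q^{-o(\lambda)/2}$ in $|Aut_O|$ (versus $q^{+o(\lambda)/2}$ in $|Aut_{Sp}|$), is what injects the extra $(1+u)$ factor into the numerator.

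The main obstacle is cleanly combining the odd-index and even-index local factors so that the final product equals $(1+u)/\prod_i (1-u^2/q^{2i-1})$. One route is to recognize intermediate products as specializations of Jacobi-style theta identities, where the linear term yields $1+u$. A cleaner alternative, given that we already have Lemma \ref{stoSp}, is to compare the series expansion term-by-term with $(1+u) \sum_\mu u^{|\mu|}/|Aut_{Sp}(\mu)|$: using $|Aut_O(\lambda)| = q^{-o(\lambda)} |Aut_{Sp}(\lambda)|$ as formal polynomials in the multiplicities, one can set up an explicit size-preserving correspondence between admissible pairs $(\epsilon, \mu)$ with $\epsilon \in \{0,1\}$ and $\mu$ having odd parts of even multiplicity, and admissible $\lambda$ having even parts of even multiplicity, verifying agreement coefficient-by-coefficient in $u$.
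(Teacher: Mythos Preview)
The paper does not actually prove this lemma; it simply cites \cite{F3}. So there is no in-paper argument to compare against, and your sketch is attempting something the paper outsources.

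Your first route (iterating the sum over the conjugate sequence $a_i=\lambda'_i$) is a reasonable strategy and is close in spirit to how such identities are typically established. But you explicitly flag the key step---combining the odd- and even-index local factors into $(1+u)/\prod_i(1-u^2/q^{2i-1})$---as the ``main obstacle'' and do not carry it out. As written, this is an outline, not a proof.

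Your second route has a more concrete problem. You propose to deduce the lemma from Lemma~\ref{stoSp} via ``an explicit size-preserving correspondence'' between pairs $(\epsilon,\mu)$ with $\epsilon\in\{0,1\}$ and $\mu$ having odd parts of even multiplicity, and partitions $\lambda$ with even parts of even multiplicity. No such bijection can exist: at size $2$ the set on the $\lambda$ side contains only $(1,1)$ (since $(2)$ is excluded), while the $(\epsilon,\mu)$ side contains $(0,(2))$ and $(0,(1,1))$, giving cardinalities $1$ versus $2$. The identity at $u^2$ holds only because the $q$-weights conspire: one checks
\[
\frac{1}{|Aut_O((1,1))|}=\frac{q}{q^2-1}
\quad\text{and}\quad
\frac{1}{|Aut_{Sp}((2))|}+\frac{1}{|Aut_{Sp}((1,1))|}=\frac{1}{q}+\frac{1}{q(q^2-1)}=\frac{q}{q^2-1}.
\]
So any ``correspondence'' here must be genuinely weighted rather than bijective, and you give no indication of how to build one. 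The relation $|Aut_O(\lambda)|=q^{-o(\lambda)}|Aut_{Sp}(\lambda)|$ you invoke compares the two formulas on the \emph{same} $\lambda$, but the two sums range over \emph{different} families of partitions, so this relation does not by itself transfer Lemma~\ref{stoSp} to the orthogonal case. If you want to pursue this direction, you would need an honest computation (or a weight-preserving involution cancelling the excess terms), which is essentially as much work as the direct approach.
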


Next we given an explicit expression for $||P_O - \Lambda_{O,z-1,n}||_{TV}$.

\begin{prop} \label{explicitO} Suppose that $q$ is odd. Then
\begin{eqnarray*}
& & ||P_O - \Lambda_{O,z-1,n}||_{TV} \\
& = & \frac{1}{4} \sum_{m>n \atop m \ even} \frac{q^{m^2/4}}{(q^m-1) \cdots (q^2-1)} \prod_i (1-1/q^{2i-1}) \\
& & + \frac{1}{4} \sum_{m>n \atop m \ odd} \frac{q^{(m-1)^2/4}}{(q^{m-1}-1) \cdots (q^2-1)} \prod_i (1-1/q^{2i-1}) \\
& & + \frac{1}{4} \sum_{m=0 \atop m \ even}^n \frac{q^{m^2/4}}{(q^m-1) \cdots (q^2-1)} \\
& & \cdot \left| \prod_i (1-1/q^{2i-1}) - \sum_{j=0}^{\lfloor (n-m)/2 \rfloor} \frac{(-1)^j q^j} {(q^{2j}-1) \cdots (q^2-1)} \right| \\
& & + \frac{1}{4} \sum_{m=0 \atop m \ odd}^n \frac{q^{(m-1)^2/4}}{(q^{m-1}-1) \cdots (q^2-1)} \\
& & \cdot
 \left| \prod_i (1-1/q^{2i-1}) - \sum_{j=0}^{\lfloor (n-m)/2 \rfloor} \frac{(-1)^j q^j} {(q^{2j}-1) \cdots (q^2-1)} \right|.
\end{eqnarray*}
\end{prop}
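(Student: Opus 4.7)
The plan is to follow the template of the proofs of Propositions \ref{explicit} and \ref{explicitSp}. I split the total variation sum $\frac{1}{2}\sum_\lambda |P_O(\lambda)-\Lambda_{O,z-1,n}(\lambda)|$ according to whether $m=|\lambda|$ exceeds $n$.

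For $m>n$ the measure $\Lambda_{O,z-1,n}(\lambda)$ vanishes, so the contribution from partitions of size $m$ is $\frac{1}{4}\prod_i(1-1/q^{2i-1})\sum_{|\lambda|=m}1/|Aut_O(\lambda)|$, the factor $\frac{1}{4}$ being $\frac{1}{2}$ from the definition of total variation distance times the $\frac{1}{2}$ in the formula for $P_O$. Lemma \ref{stoO} gives $\sum_\lambda u^{|\lambda|}/|Aut_O(\lambda)|=(1+u)/\prod_i(1-u^2/q^{2i-1})$, and part 2 of Lemma \ref{eulSp} expands $\prod_i(1-u^2/q^{2i-1})^{-1}$ as $\sum_j u^{2j}q^{j^2}/((q^{2j}-1)\cdots(q^2-1))$; multiplying by $(1+u)$ and extracting the coefficient of $u^m$ splits by the parity of $m$ and yields the first two sums of the proposition.

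For $m\leq n$ I use the cycle index of the averaged orthogonal groups $\frac{1}{2}(Z^++Z^-)$ developed in \cite{F3}. Its local factor at $\phi=z-1$ contributes $\sum_\lambda u^{|\lambda|}/|Aut_O(\lambda)|$ to the cycle index, so dividing the evaluated averaged cycle index by this identifies the combined contribution of all other polynomials (at all variables equal to one) as $\frac{\prod_i(1-u^2/q^{2i-1})}{2(1-u)}$, with the factor $\frac{1}{2}$ reflecting the averaging over orthogonal types and $(1-u)^{-1}$ encoding the $\phi=z$ block. Hence $\Lambda_{O,z-1,n}(\lambda)$ equals the coefficient of $u^{n-m}$ in $\frac{1}{|Aut_O(\lambda)|}\cdot\frac{\prod_i(1-u^2/q^{2i-1})}{2(1-u)}$; expanding $(1-u)^{-1}$ as a geometric series and applying part 1 of Lemma \ref{eulSp} to $\prod_i(1-u^2/q^{2i-1})$, this collapses to $\frac{1}{2|Aut_O(\lambda)|}\sum_{j=0}^{\lfloor(n-m)/2\rfloor}\frac{(-1)^jq^j}{(q^{2j}-1)\cdots(q^2-1)}$.

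Substituting back, the TV contribution from $|\lambda|=m\leq n$ is $\frac{1}{4}\sum_{|\lambda|=m}\frac{1}{|Aut_O(\lambda)|}\left|\prod_i(1-1/q^{2i-1})-\sum_{j=0}^{\lfloor(n-m)/2\rfloor}\frac{(-1)^jq^j}{(q^{2j}-1)\cdots(q^2-1)}\right|$, and using once more the parity-dependent value of $\sum_{|\lambda|=m}1/|Aut_O(\lambda)|$ computed above assembles the third and fourth sums of the statement. The main technical obstacle is deriving the precise form $\frac{\prod_i(1-u^2/q^{2i-1})}{2(1-u)}$ for the non-$(z-1)$ part of the averaged orthogonal cycle index; extracting this from \cite{F3} requires careful bookkeeping of the interaction between orthogonal type constraints and the $\frac{1}{2}$-averaging, after which the remainder is routine coefficient extraction faithfully mirroring the symplectic proof.
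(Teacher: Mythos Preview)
Your proposal follows essentially the same route as the paper: split the total variation sum by whether $m=|\lambda|$ exceeds $n$, compute $\Lambda_{O,z-1,n}(\lambda)$ via the averaged orthogonal cycle index, and evaluate $\sum_{|\lambda|=m}1/|Aut_O(\lambda)|$ using Lemmas \ref{stoO} and \ref{eulSp}.

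One correction is needed in your justification of the non-$(z-1)$ factor. You attribute the $(1-u)^{-1}$ to a ``$\phi=z$ block,'' but orthogonal matrices are invertible, so the polynomial $z$ never appears in their rational canonical form. The paper instead makes the decomposition explicit: the factor at $z+1$ is $(1+u)/\prod_i(1-u^2/q^{2i-1})$ (mirroring the $z-1$ factor, as it should in odd characteristic), and the factor for all polynomials other than $z\pm 1$ is $\prod_i(1-u^2/q^{2i-1})^2/(1-u^2)$; multiplying these and simplifying $(1+u)/(1-u^2)=1/(1-u)$ gives exactly your $\prod_i(1-u^2/q^{2i-1})/(1-u)$. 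With this bookkeeping fixed, your argument coincides with the paper's.
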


\begin{proof} To begin we consider the contribution to the total variation distance coming from $\lambda$ of size $m>n$.
Since $\Lambda_{O,z-1,n}(\lambda)=0$ for such $\lambda$, the contribution is
\[ \frac{1}{4} \sum_{|\lambda|=m>n} \frac{1}{|Aut_O(\lambda)|} \prod_i (1-1/q^{2i-1}). \] By Lemma \ref{stoO} and part 2 of Lemma \ref{eulSp},
this is equal to
\begin{eqnarray*}
& & \frac{1}{4} \sum_{m>n \atop m \ even} \frac{q^{m^2/4}}{(q^m-1) \cdots (q^2-1)} \prod_i (1-1/q^{2i-1}) \\
& & + \frac{1}{4} \sum_{m>n \atop m \ odd} \frac{q^{(m-1)^2/4}}{(q^{m-1}-1) \cdots (q^2-1)} \prod_i (1-1/q^{2i-1}).
\end{eqnarray*}

Next we consider the probability that $\Lambda_{O,z-1,n}$ assigns to $\lambda$ when $|\lambda|=m \leq n$. By the cycle index
of the orthogonal groups \cite{F1}, this is equal to
\[ \frac{1}{2} \frac{1}{|Aut_O(\lambda)|} Coef. \ u^{n-m} \ in \ \frac{1+u}{\prod_i (1-u^2/q^{2i-1})} \frac{\prod_i (1-u^2/q^{2i-1})^2}{1-u^2} .\]
Indeed, the term $\frac{1+u}{\prod_i (1-u^2/q^{2i-1})}$ corresponds to the polynomial $z+1$ in the cycle index, and the term $\frac{\prod_i (1-u^2/q^{2i-1})^2}
{1-u^2}$ corresponds to polynomials other than $z \pm 1$ in the cycle index. Canceling terms, one obtains that $\Lambda_{O,z-1,n}(\lambda)$ is equal
to
\begin{eqnarray*}
& & \frac{1}{2} \frac{1}{|Aut_O(\lambda)|} Coef. \ u^{n-m} \ in \ \frac{\prod_i (1-u^2/q^{2i-1})}{1-u} \\
& = & \frac{1}{2} \frac{1}{|Aut_O(\lambda)|} \sum_{j=0}^{n-m} Coef. \ u^j \ in \ \prod_i (1-u^2/q^{2i-1}) \\
& = & \frac{1}{2} \frac{1}{|Aut_O(\lambda)|} \sum_{j=0}^{\lfloor (n-m)/2 \rfloor} Coef. \ u^{2j} \ in \ \prod_i (1-u^2/q^{2i-1}) \\
& = & \frac{1}{2} \frac{1}{|Aut_O(\lambda)|} \sum_{j=0}^{\lfloor (n-m)/2 \rfloor} \frac{(-1)^j q^j}{(q^{2j}-1) \cdots (q^2-1)},
\end{eqnarray*} where the last step used part 1 of Lemma \ref{eulSp}.

Thus the contribution to $||P_O - \Lambda_{O,z-1,n}||_{TV}$ coming from $\lambda$ with $|\lambda|=m \leq n$ is
\[ \frac{1}{2} \sum_{m=0}^n \sum_{|\lambda|=m} \frac{1}{2 |Aut_O(\lambda)|} \left| \prod_i (1-1/q^{2i-1}) - \sum_{j=0}^{\lfloor (n-m)/2 \rfloor} \frac{(-1)^j q^j}
{(q^{2j}-1) \cdots (q^2-1)} \right|.\] By Lemma \ref{stoO} and part 2 of Lemma \ref{eulSp},
\[ \sum_{|\lambda|=m} \frac{1}{|Aut_O(\lambda)|} = \frac{q^{m^2/4}}{(q^m-1) \cdots (q^2-1)},\] if $m$ is even, and
\[ \sum_{|\lambda|=m} \frac{1}{|Aut_O(\lambda)|} = \frac{q^{(m-1)^2/4}}{(q^{m-1}-1) \cdots (q^2-1)},\] if $m$ is odd. Thus
the contribution to $||P_O - \Lambda_{O,z-1,n}||_{TV}$ coming from $\lambda$ with $|\lambda|=m \leq n$ is
\begin{eqnarray*}
& & \frac{1}{4} \sum_{m=0 \atop m \ even}^n \frac{q^{m^2/4}}{(q^m-1) \cdots (q^2-1)} \\
 & & \cdot \left| \prod_i (1-1/q^{2i-1}) - \sum_{j=0}^{\lfloor (n-m)/2 \rfloor} \frac{(-1)^j q^j} {(q^{2j}-1) \cdots (q^2-1)} \right| \\
& & + \frac{1}{4} \sum_{m=0 \atop m \ odd}^n \frac{q^{(m-1)^2/4}}{(q^{m-1}-1) \cdots (q^2-1)} \\
& & \cdot
 \left| \prod_i (1-1/q^{2i-1}) - \sum_{j=0}^{\lfloor (n-m)/2 \rfloor} \frac{(-1)^j q^j} {(q^{2j}-1) \cdots (q^2-1)} \right|.
\end{eqnarray*} This completes the proof.
\end{proof}

Now we prove the main result of this section.

\begin{theorem} \label{obound}
\begin{enumerate}
\item For $n \geq 2$ even and $q$ odd,
\[ \frac{.1}{q^{n/2}} \leq ||P_O - \Lambda_{O,z-1,n}||_{TV} \leq \frac{1.3}{q^{n/2}}.\]

\item For $n \geq 1$ odd and $q$ odd,
\[ \frac{.1}{q^{(n+1)/2}} \leq ||P_O - \Lambda_{O,z-1,n}||_{TV} \leq \frac{2}{q^{(n+1)/2}}.\]
\end{enumerate}
\end{theorem}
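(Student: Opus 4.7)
The plan is to apply Proposition \ref{explicitO} directly and bound each of its four pieces termwise, in the spirit of the proofs of Theorems \ref{glbound}, \ref{Ubound}, and \ref{spbound}, with parity tracking being the only new ingredient. For the lower bound, I would retain only the smallest $m > n$ term from the first or second sum in Proposition \ref{explicitO}. When $n$ is even the smallest such $m$ is $n+1$ (odd), so the second sum contributes
\[
\frac{1}{4} \cdot \frac{q^{n^2/4}}{(q^n-1)(q^{n-2}-1)\cdots(q^2-1)} \prod_i (1-1/q^{2i-1});
\]
replacing the denominator by its leading term $q^{n(n+2)/4}$ reduces this to $\approx \frac{1}{4 q^{n/2}} \prod_i (1-1/q^{2i-1})$. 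When $n$ is odd the smallest relevant $m$ is $n+1$ (even), with analogous leading behaviour $\approx \frac{1}{4 q^{(n+1)/2}} \prod_i (1-1/q^{2i-1})$. In both cases one bounds $\prod_i(1-1/q^{2i-1})$ below by an absolute constant using Euler's pentagonal number theorem exactly as in the proof of Theorem \ref{glbound}, producing the factor $0.1$.

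For the upper bound, I would handle each of the four sums in Proposition \ref{explicitO} separately. For the two tail sums over $m > n$, I drop the product $\prod_i(1-1/q^{2i-1}) \leq 1$ and rewrite the denominators $(q^m-1)\cdots(q^2-1)$ and $(q^{m-1}-1)\cdots(q^2-1)$ as $q^{m(m+2)/4}$ and $q^{(m-1)(m+1)/4}$ times $\prod(1-1/q^{2i})$ respectively. Using the uniform bound $\prod_i(1-1/q^{2i})^{-1} \leq 1.5$ from the proof of Theorem \ref{spbound}, each term becomes at most a constant times $q^{-m/2}$ or $q^{-(m-1)/2}$, and the resulting geometric sums in $1/q$ produce $O(1/q^{n/2})$ or $O(1/q^{(n+1)/2})$ according to the parity of $n$. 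For the two sums over $m \leq n$, part 1 of Lemma \ref{eulSp} with $u=1$ rewrites the absolute-value factor as the tail
\[
\Bigl| \sum_{j > \lfloor(n-m)/2 \rfloor} \frac{(-1)^j q^j}{(q^{2j}-1)\cdots(q^2-1)} \Bigr|,
\]
which is at most a constant multiple of its first term (of size $q^{k+1}/q^{(k+1)(k+2)}$ with $k = \lfloor(n-m)/2 \rfloor$). Multiplying by the front factor and applying the $1.5$-bound once more, the double sums collapse to a geometric sum of order $1/q^{n/2}$ (resp.\ $1/q^{(n+1)/2}$), with constants small enough to yield the stated $1.3$ and $2$.

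The main obstacle will be the parity bookkeeping. For $n$ even the leading $q^{-n/2}$ contributions come both from the odd-$m$ tail starting at $m=n+1$ and from the odd-$m$ boundary $m=n-1$ in the $m \leq n$ sum, while the even-$m$ analogues are smaller by a factor of $q$; for $n$ odd the roles of the two parities swap. Extracting the numerical constants $1.3$ and $2$ then requires verifying that these two leading contributions combine acceptably, but otherwise the argument is formally identical to the symplectic one.
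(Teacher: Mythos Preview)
Your proposal is essentially the paper's own proof: take the $m=n+1$ term for the lower bound, and for the upper bound split into the four pieces of Proposition~\ref{explicitO}, control the absolute-value factor by the first omitted term of the alternating series from Lemma~\ref{eulSp}, and reduce everything to geometric sums in $1/q$. The one point to flag is your use of $\prod_j (1-1/q^{2j})^{-1} \le 1.5$: since $q$ is odd here (so $q \ge 3$), the paper uses the sharper bound $\prod_j (1-1/q^{2j})^{-1} \le 1.2$, and you will need this sharper value to land on the constant $1.3$ in the even-$n$ case (with $1.5$ the four pieces sum to roughly $1.6$).
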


\begin{proof} Suppose that $n$ is even. To lower bound the total variation distance, looking at the $m=n+1$ term in Proposition \ref{explicitO} gives that
\begin{eqnarray*}
||P_O - \Lambda_{O,z-1,n}||_{TV} & \geq & \frac{1}{4} \frac{q^{n^2/4}}{(q^n-1) \cdots (q^2-1)} \prod_i (1-1/q^{2i-1}) \\
& \geq & \frac{1}{4} \frac{q^{n^2/4}}{q^{n^2/4+n/2}} \prod_i (1-1/q^{2i-1}) \\
& \geq & \frac{.1}{q^{n/2}}.
\end{eqnarray*}

Next we consider the upper bound when $n$ is even; by Proposition \ref{explicitO} this is a sum of four terms. The first term is
\begin{eqnarray*}
& & \frac{1}{4} \sum_{m>n \atop m \ even} \frac{q^{m^2/4}}{(q^m-1) \cdots (q^2-1)} \prod_i (1-1/q^{2i-1}) \\
& = & \frac{1}{4} \sum_{m>n \atop m \ even} \frac{1}{q^{m/2}} \frac{\prod_i (1-1/q^{2i-1})}{(1-1/q^2) \cdots (1-1/q^m)} \\
& \leq & \frac{1}{4} \sum_{m>n \atop m \ even} \frac{1}{q^{m/2}} \\
& = & \frac{1}{4} \frac{1}{q^{n/2+1} (1-1/q)} \\
& \leq & \frac{3}{8} \frac{1}{q^{n/2+1}} \\
& \leq & \frac{1}{8 q^{n/2}}.
\end{eqnarray*} The second term in the upper bound is
\begin{eqnarray*}
& & \frac{1}{4} \sum_{m>n \atop m \ odd} \frac{q^{(m-1)^2/4}}{(q^{m-1}-1) \cdots (q^2-1)} \prod_i (1-1/q^{2i-1}) \\
& = & \frac{1}{4} \sum_{m>n \atop m \ odd} \frac{1}{q^{(m-1)/2}} \frac{\prod_i (1-1/q^{2i-1})}{(1-1/q^2) \cdots (1-1/q^{m-1})} \\
& \leq & \frac{1}{4} \sum_{m>n \atop m \ odd} \frac{1}{q^{(m-1)/2}} \\
& = & \frac{1}{4} \frac{1}{q^{n/2} (1-1/q)} \\
& \leq & \frac{3}{8 q^{n/2}}.
\end{eqnarray*} The third term in the upper bound is
\begin{eqnarray*}
& &  \frac{1}{4} \sum_{m=0 \atop m \ even}^n \frac{q^{m^2/4}}{(q^m-1) \cdots (q^2-1)} \\
& & \cdot \left| \prod_i (1-1/q^{2i-1}) - \sum_{j=0}^{(n-m)/2} \frac{(-1)^j q^j} {(q^{2j}-1) \cdots (q^2-1)} \right| \\
& \leq & \frac{1}{4} \sum_{m=0 \atop m \ even}^n \frac{q^{m^2/4}}{(q^m-1) \cdots (q^2-1)} \frac{q^{(n-m+2)/2}}{(q^{n-m+2}-1) \cdots (q^2-1)}.
\end{eqnarray*} Since $\prod_j (1-1/q^{2j})^{-1} \leq 1.2$, the third term is at most
\begin{eqnarray*}
\frac{(1.2)^2}{4} \sum_{m=0 \atop m \ even}^n \frac{1}{q^{m/2}} \frac{1}{q^{(\frac{n-m+2}{2})^2}}
& \leq & \frac{(1.2)^2}{4 q^{n/2+1} (1-1/q)} \\
& \leq & \frac{3(1.2)^2}{8 q^{n/2+1}} \\
& \leq & \frac{.2}{q^{n/2}}.
\end{eqnarray*} The fourth term in the upper bound is
\begin{eqnarray*}
& & \frac{1}{4} \sum_{m=0 \atop m \ odd}^n \frac{q^{(m-1)^2/4}}{(q^{m-1}-1) \cdots (q^2-1)} \\
& & \cdot \left| \prod_i (1-1/q^{2i-1}) - \sum_{j=0}^{(n-m-1)/2} \frac{(-1)^j q^j}{(q^{2j}-1) \cdots (q^2-1)} \right| \\
& \leq & \frac{1}{4} \sum_{m=0 \atop m \ odd}^n \frac{q^{(m-1)^2/4}}{(q^{m-1}-1) \cdots (q^2-1)} \frac{q^{(n-m+1)/2}}{(q^{n-m+1}-1) \cdots (q^2-1)}.
\end{eqnarray*} Again using that $\prod_j (1-1/q^{2j})^{-1} \leq 1.2$, the fourth term is at most
\[ \frac{(1.2)^2}{4} \sum_{m=0 \atop m \ odd}^n \frac{1}{q^{(m-1)/2}} \frac{1}{q^{(\frac{n-m+1}{2})^2}}
\leq \frac{(1.2)^2}{4 q^{n/2} (1-1/q)} \leq \frac{.6}{q^{n/2}}.\]

Combining the above bounds proves the theorem for $n \geq 2$ even.

Next suppose that $n$ is odd. To lower bound the total variation distance, looking at the $m=n+1$ term in Proposition \ref{explicitO} gives that
\begin{eqnarray*}
||P_O - \Lambda_{O,z-1,n}||_{TV} & \geq & \frac{1}{4} \frac{q^{(n+1)^2/4}}{(q^{n+1}-1) \cdots (q^2-1)} \prod_i (1-1/q^{2i-1}) \\
& \geq & \frac{1}{4 q^{(n+1)/2}} \prod_i (1-1/q^{2i-1}) \\
& \geq & \frac{.1}{q^{(n+1)/2}}.
\end{eqnarray*}

By Proposition \ref{explicitO}, the upper bound is a sum of four terms. For the first term, one argues as in the $n$ even case to obtain
\begin{eqnarray*}
\frac{1}{4} \sum_{m>n \atop m \ even} \frac{q^{m^2/4}}{(q^m-1) \cdots (q^2-1)} \prod_i (1-1/q^{2i-1}) & \leq & \frac{1}{4} \sum_{m>n \atop m \ even}
\frac{1}{q^{m/2}} \\
& = & \frac{1}{4} \frac{1}{q^{(n+1)/2} (1-1/q)} \\
& \leq & \frac{3}{8 q^{(n+1)/2}}.
\end{eqnarray*} For the second term, one also argues as in the $n$ even case to obtain
\begin{eqnarray*}
\frac{1}{4} \sum_{m>n \atop m \ odd} \frac{q^{(m-1)^2/4}}{(q^{m-1}-1) \cdots (q^2-1)} \prod_i (1-1/q^{2i-1})
& \leq & \frac{1}{4} \sum_{m>n \atop m \ odd} \frac{1}{q^{(m-1)/2}} \\
& = & \frac{1}{4} \frac{1}{q^{(n+1)/2} (1-1/q)} \\
& \leq & \frac{3}{8 q^{(n+1)/2}}. \end{eqnarray*} The third term in the upper bound is
\begin{eqnarray*}
& & \frac{1}{4} \sum_{m=0 \atop m \ even}^n \frac{q^{m^2/4}}{(q^m-1) \cdots (q^2-1)} \\
& & \cdot \left| \prod_i (1-1/q^{2i-1}) - \sum_{j=0}^{(n-m-1)/2} \frac{(-1)^j q^j}{(q^{2j}-1) \cdots (q^2-1)} \right| \\
& \leq & \frac{1}{4} \sum_{m=0 \atop m \ even}^n \frac{q^{m^2/4}}{(q^m-1) \cdots (q^2-1)} \frac{q^{(n-m+1)/2}}{(q^{n-m+1}-1) \cdots (q^2-1)}.
\end{eqnarray*} Using $\prod_j (1-1/q^{2j})^{-1} \leq 1.2$ shows that the third term is at most
\begin{eqnarray*}
\frac{(1.2)^2}{4} \sum_{m=0 \atop m \ even}^n \frac{1}{q^{m/2}} \frac{1}{q^{(\frac{n-m+1}{2})^2}}
& \leq & \frac{.4}{q^{(n+1)/2} (1-1/q)} \\
& \leq & \frac{.6}{q^{(n+1)/2}}.
\end{eqnarray*} The fourth term in the upper bound is
\begin{eqnarray*}
& & \frac{1}{4} \sum_{m=0 \atop m \ odd}^n \frac{q^{(m-1)^2/4}}{(q^{m-1}-1) \cdots (q^2-1)} \\
& & \cdot \left| \prod_i (1-1/q^{2i-1}) - \sum_{j=0}^{(n-m)/2} \frac{(-1)^j q^j}{(q^{2j}-1) \cdots (q^2-1)} \right| \\
& \leq & \frac{1}{4} \sum_{m=0 \atop m \ odd}^n \frac{q^{(m-1)^2/4}}{(q^{m-1}-1) \cdots (q^2-1)} \frac{q^{(n-m+2)/2}}{(q^{n-m+2}-1) \cdots (q^2-1)}.
\end{eqnarray*} Using $\prod_j (1-1/q^{2j})^{-1} \leq 1.2$ shows that the fourth term is at most
\begin{eqnarray*}
\frac{(1.2)^2}{4} \sum_{m = 0 \atop m \ odd}^n \frac{1}{q^{(m-1)/2}} \frac{1}{q^{(\frac{n-m+2}{2})^2}}
& \leq & \frac{(1.2)^2}{4} \frac{1}{q^{(n+1)/2} (1-1/q)} \\
& \leq & \frac{.6}{q^{(n+1)/2}}.
\end{eqnarray*}

Combining the above bounds proves the theorem for $n \geq 1$ odd.
\end{proof}

\subsection{Even characteristic} \label{even}

Throughout this subsection it is assumed that the characteristic is even. We define an orthogonal analog $P_O$ of
the Cohen-Lenstra measure on the set of all partitions of all non-negative integers in which the odd parts occur with even multiplicity. A formula for the measure $P_O$ is:
\[ P_O(\lambda) = \frac{\prod_{i \geq 1} (1-1/q^{2i-1})}{2 |Aut_O(\lambda)|}, \] where $|Aut_O(\lambda)|$
is defined by the formula
\[ |Aut_O(\lambda)| = q^{n(\lambda)+\frac{|\lambda|}{2}+\frac{o(\lambda)}{2} - l(\lambda)} \prod_i (1-1/q^2) (1-1/q^4) \cdots (1-1/q^{2 \lfloor \frac{m_i(\lambda)}{2} \rfloor}).\] Here, as earlier, $m_i(\lambda)$ denotes the multiplicity of $i$ in the partition $\lambda$, $o(\lambda)$ denotes the number of odd parts of $\lambda$, and $n(\lambda) = \sum_{i} {\lambda_i' \choose 2}$. The symbol $l(\lambda)$ denotes the number of parts of $\lambda$.
\[ \]
{\it Remark}: The measure $P_O$ is the special case ($u=1$) of a measure studied in \cite{FST} which chooses $\lambda$ with probability \[ \frac{\prod_i (1-u^2/q^{2i-1})}{1+u^2} \frac{u^{|\lambda|}}{|Aut_O(\lambda)|}. \]

To define the probability measure $\Lambda_{O,z-1,n}$, we use, as in the other cases, the theory of rational canonical forms. We choose an element $g$, with probability $1/2$ uniformly at random from $O^+(2n,q)$ and with probability $1/2$ uniformly at random from $O^-(2n,q)$. (Note that in even characteristic, odd dimensional orthogonal groups are isomorphic to symplectic groups, so we focus on even dimensional orthogonal groups). Then there is a partition $\lambda_{z-1}(g)$ of size at most $2n$ associated to the polynomial $z-1$. We let $\Lambda_{O,z-1,n}$ denote the corresponding measure on partitions. From \cite{FST} it is known that as $n \rightarrow \infty$, the measure $\Lambda_{O,z-1,n}$ converges to the measure $P_O$. The main result of this section is to make this convergence quantitative.

The following lemma is from \cite{FST}.

\begin{lemma} \label{stoOeven} Suppose that $q$ is even. Then
\[ \sum_{\lambda} \frac{u^{|\lambda|}}{|Aut_O(\lambda)|} = \frac{1+u^2}{\prod_i (1-u^2/q^{2i-1})} .\]
\end{lemma}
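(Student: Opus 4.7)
The plan is to follow the strategy of the proofs of Propositions \ref{explicit} through \ref{explicitO}, which extract generating functions from the cycle indices of the finite classical groups, adapted to the even-characteristic orthogonal case.

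The cycle index of $O^\pm(2n,q)$ in even characteristic (as in \cite{F1} or \cite{FST}) factors as a product over monic irreducible polynomials $\phi \in \mathbb{F}_q[z]$: the factor at $\phi = z-1$ is precisely the sum $\sum_\lambda u^{|\lambda|}/|Aut_O(\lambda)|$ we want, and the factors at $\phi \neq z-1$ are explicit. Setting all cycle-index variables to $1$, weighting by $u^{2n}$, and averaging over $O^+$ and $O^-$ gives the total mass $\sum_n u^{2n} = 1/(1-u^2)$. Dividing by the value of the product of the $\phi \neq z-1$ factors, which in even characteristic equals $\prod_i (1-u^2/q^{2i-1})/(1-u^4)$, isolates
\[ \sum_\lambda \frac{u^{|\lambda|}}{|Aut_O(\lambda)|} = \frac{1/(1-u^2)}{\prod_i (1-u^2/q^{2i-1})/(1-u^4)} = \frac{1+u^2}{\prod_i (1-u^2/q^{2i-1})}, \]
which is the claim.

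As a consistency check one can use the following reduction: the formulas for $|Aut_{Sp}(\lambda)|$ and $|Aut_O(\lambda)|$ (in even characteristic) are defined on the same set of partitions and differ only by an extra $-l(\lambda)$ in the exponent of $q$, so $|Aut_O(\lambda)| = |Aut_{Sp}(\lambda)|/q^{l(\lambda)}$. Thus Lemma \ref{stoOeven} is equivalent to the weighted identity
\[ \sum_\lambda \frac{u^{|\lambda|} q^{l(\lambda)}}{|Aut_{Sp}(\lambda)|} = \frac{1+u^2}{\prod_i (1-u^2/q^{2i-1})}, \]
a twist of Lemma \ref{stoSp} in which each part contributes an extra factor of $q$.

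The main obstacle will be establishing that the product of the $\phi \neq z-1$ cycle-index factors equals $\prod_i (1-u^2/q^{2i-1})/(1-u^4)$ in even characteristic. In odd characteristic this product was $\prod_i (1-u^2/q^{2i-1})^2/(1-u^2)$ (as used in the proof of Proposition \ref{explicitO}), so the discrepancy between the two settings reflects how the collapse $z+1 = z-1$ in characteristic $2$ redistributes the orthogonal contributions among the remaining polynomials. This parity bookkeeping, together with the explicit even-characteristic formulas for $|O^\pm(2n,q)|$ and the $\phi = z^2 + 1$ (and related self-reciprocal) factors, is the nontrivial input from which the numerator $1+u^2$ in the lemma ultimately arises.
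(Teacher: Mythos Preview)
The paper does not prove this lemma; it simply cites \cite{FST}. Your approach---back out the $z-1$ factor by dividing the total mass of the cycle index by the product of the $\phi\neq z-1$ factors---is a legitimate strategy and is genuinely different from a direct combinatorial proof of the $q$-series identity. However, the argument as written has a real gap: you assert that the $\phi\neq z-1$ product equals $\prod_i(1-u^2/q^{2i-1})/(1-u^4)$ without proving it, and you yourself flag this as ``the main obstacle.'' That computation is precisely the substantive content here. To carry it out you would need the explicit factor at every irreducible $\phi\neq z-1$ in even characteristic (GL-type for $\{\phi,\tilde\phi\}$ pairs, unitary-type for self-conjugate $\phi$), together with the polynomial-counting identities that collapse the infinite product; this is exactly the work done in \cite{FST}, so invoking it amounts to citing the same source the paper cites. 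Note also a normalization mismatch: in Proposition~\ref{explicitOeven} the paper records the $\phi\neq z-1$ product as $\prod_i(1-u^2/q^{2i-1})/(1-u^2)$, with the compensating factor of $1/2$ attached to the $z-1$ contribution; your $1/(1-u^4)$ is only consistent with an ``averaged'' cycle index in which the $z-1$ factor carries no $1/2$, so you should make the convention explicit.

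Your consistency check is correct as a reformulation: indeed $|Aut_O(\lambda)|=|Aut_{Sp}(\lambda)|/q^{\,l(\lambda)}$, so the lemma is equivalent to $\sum_\lambda u^{|\lambda|}q^{\,l(\lambda)}/|Aut_{Sp}(\lambda)|=(1+u^2)/\prod_i(1-u^2/q^{2i-1})$. Proving \emph{that} identity directly (e.g.\ by adapting the $q$-series argument behind Lemma~\ref{stoSp} with an extra weight $q$ per part) would give a self-contained proof that avoids the cycle index altogether and would be preferable to the route you outline, since it does not rely on the even-characteristic orthogonal bookkeeping that is the hard part of \cite{FST}.
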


Next we give an explicit expression for $||P_O - \Lambda_{O,z-1,n}||_{TV}$.

\begin{prop} \label{explicitOeven} Suppose that $q$ is even. Then
\begin{eqnarray*}
& & ||P_O - \Lambda_{O,z-1,n}||_{TV} \\
& = & \frac{1}{4} \sum_{m>n} \left[ \frac{q^{m^2}}{(q^{2m}-1) \cdots (q^2-1)} + \frac{q^{(m-1)^2}}{(q^{2m-2}-1) \cdots (q^2-1)}   \right] \\
& & \cdot \prod_i (1-1/q^{2i-1}) \\
& & + \frac{1}{4} \sum_{m=1}^n \left[ \frac{q^{m^2}}{(q^{2m}-1) \cdots (q^2-1)} +  \frac{q^{(m-1)^2}}{(q^{2m-2}-1) \cdots (q^2-1)}  \right] \\
& & \cdot \left| \prod_i (1-1/q^{2i-1}) -
\sum_{j=0}^{n-m} \frac{(-1)^j q^j}{(q^{2j}-1) \cdots (q^2-1)} \right| \\
& & + \frac{1}{4} \left| \prod_i (1-1/q^{2i-1}) -
\sum_{j=0}^{n} \frac{(-1)^j q^j}{(q^{2j}-1) \cdots (q^2-1)} \right|.
\end{eqnarray*}
\end{prop}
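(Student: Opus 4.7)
The plan is to mirror the schema of the earlier propositions in this paper, splitting the total variation sum into contributions from partitions with $|\lambda|>2n$ (on which $\Lambda_{O,z-1,n}$ vanishes) and from partitions with $|\lambda|\le 2n$. Since $P_O$ is supported on partitions whose odd parts have even multiplicity, $|\lambda|$ is always even, so I would write $|\lambda|=2m$ throughout.

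For the contribution from $|\lambda|=2m>2n$, the TV cost is $\frac{1}{4}\prod_i(1-1/q^{2i-1})\sum_{m>n}\sum_{|\lambda|=2m}1/|Aut_O(\lambda)|$. I would evaluate the inner sum by extracting the coefficient of $u^{2m}$ in the generating function of Lemma \ref{stoOeven}, namely $(1+u^2)/\prod_i(1-u^2/q^{2i-1})$, using part 2 of Lemma \ref{eulSp} to expand the reciprocal product. The $1+u^2$ numerator splits this coefficient into the two fractions $q^{m^2}/((q^{2m}-1)\cdots(q^2-1))$ coming from the ``$1$'' and $q^{(m-1)^2}/((q^{2m-2}-1)\cdots(q^2-1))$ coming from the ``$u^2$''; this is exactly the first displayed double sum of the proposition.

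For $|\lambda|=2m\le 2n$, I would use the cycle index of $O^\pm(2n,q)$ in characteristic two from \cite{FST}. Following the odd-characteristic argument of Proposition \ref{explicitO}, $\Lambda_{O,z-1,n}(\lambda)$ equals $\frac{1}{2|Aut_O(\lambda)|}$ times the coefficient of $u^{2n-2m}$ in the ``non-$(z-1)$'' portion of the cycle index; the outside $\frac{1}{2}$ records the uniform average over $O^+$ and $O^-$. Because $z-1=z+1$ in characteristic two, there is no separate $z+1$ factor, and matching Lemma \ref{stoOeven} against the total cycle index forces the non-$(z-1)$ factor to be $\prod_i(1-u^2/q^{2i-1})/(1-u^2)$. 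Expanding $1/(1-u^2)=\sum_{k\ge 0}u^{2k}$ and applying part 1 of Lemma \ref{eulSp} to the numerator collapses this coefficient to the partial sum $\sum_{j=0}^{n-m}(-1)^j q^j/((q^{2j}-1)\cdots(q^2-1))$. Combining with the inner-sum evaluation from the previous paragraph produces the second double sum of the proposition; the case $m=0$ must be separated off, since then only $\lambda=\emptyset$ contributes (with $|Aut_O(\emptyset)|=1$), the $q^{(m-1)^2}$ fraction is absent, and the result is precisely the isolated final summand.

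The main obstacle is the cycle-index bookkeeping in even characteristic. One needs to verify from \cite{FST} that, despite the coalescence of the polynomials $z-1$ and $z+1$, the $(z-1)$-factor of the cycle index is exactly $(1+u^2)/\prod_i(1-u^2/q^{2i-1})$ (as governed by Lemma \ref{stoOeven}) while the remainder contributes exactly $\prod_i(1-u^2/q^{2i-1})/(1-u^2)$. Once this factorization is in place, the remainder of the argument is routine manipulation via Lemmas \ref{stoOeven} and \ref{eulSp}, strictly parallel to the proof of Proposition \ref{explicitO}.
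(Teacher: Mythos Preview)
Your proposal is correct and follows essentially the same approach as the paper: split the total variation sum by $|\lambda|=2m$ with $m>n$ versus $0\le m\le n$, evaluate $\sum_{|\lambda|=2m}1/|Aut_O(\lambda)|$ via Lemma~\ref{stoOeven} and part~2 of Lemma~\ref{eulSp}, identify the non-$(z-1)$ portion of the even-characteristic cycle index as $\prod_i(1-u^2/q^{2i-1})/(1-u^2)$, and expand it using part~1 of Lemma~\ref{eulSp}. Your observation that $z-1=z+1$ in characteristic two explains why no separate $z+1$ factor appears, and your handling of the $m=0$ term matches the paper's isolation of that summand.
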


\begin{proof} To begin, we consider the contribution to the total variation distance coming from $\lambda$ of size
$2m>2n$. Since $\Lambda_{O,z-1,n}(\lambda)=0$ for such $\lambda$, the contribution is
\[ \frac{1}{4} \sum_{|\lambda|=2m>2n} \frac{\prod_i (1-1/q^{2i-1})}{|Aut_O(\lambda)|}. \] By Lemma \ref{stoOeven} and part 2 of Lemma \ref{eulSp}, this is equal to
\[ \frac{1}{4} \sum_{m>n} \left[ \frac{q^{m^2}}{(q^{2m}-1) \cdots (q^2-1)} + \frac{q^{(m-1)^2}}{(q^{2m-2}-1) \cdots (q^2-1)}   \right] \prod_i (1-1/q^{2i-1}).\]

Next consider the probability that $\Lambda_{O,z-1,n}$ associates to $\lambda$ when $|\lambda|=2m \leq 2n$. By the cycle index of the orthogonal groups \cite{FST}, this probability is equal to the coefficient of $u^{2n-2m}$ in
\[ \frac{1}{2 |Aut_O(\lambda)|} \frac{\prod_i (1-u^2/q^{2i-1})}{1-u^2}.\] Indeed, \[ \frac{\prod_i (1-u^2/q^{2i-1})}{1-u^2} \] is the part of the cycle index of the orthogonal groups corresponding to polynomials other than $z-1$. Thus $\Lambda_{O,z-1,n}(\lambda)$ is equal to
\[ \frac{1}{2 |Aut_O(\lambda)|} \sum_{j=0}^{n-m} Coef. \ u^j \ in \ \prod_i (1-u/q^{2i-1}).\] By part 1 of Lemma
\ref{eulSp}, this is equal to \[ \frac{1}{2 |Aut_O(\lambda)|} \sum_{j=0}^{n-m} \frac{(-1)^j q^j}{(q^{2j}-1) \cdots (q^2-1)}.\]

Thus the contribution to $||P_O - \Lambda_{O,z-1,n}||_{TV}$ coming from $\lambda$ with $|\lambda|=2m \leq 2n$ is
\[ \frac{1}{4} \sum_{m=0}^n \sum_{|\lambda|=2m} \frac{1}{|Aut_O(\lambda)|} \left| \prod_i (1-1/q^{2i-1}) -
\sum_{j=0}^{n-m} \frac{(-1)^j q^j}{(q^{2j}-1) \cdots (q^2-1)} \right|.\] By Lemma \ref{stoOeven} and part 2 of Lemma
\ref{eulSp}, if $m \geq 1$, then
\begin{eqnarray*}
\sum_{|\lambda|=2m} \frac{1}{|Aut_O(\lambda)|} & = & Coef. \ u^{2m} \ in \ \frac{1+u^2}{\prod_i (1-u^2/q^{2i-1})} \\
& = & \frac{q^{m^2}}{(q^{2m}-1) \cdots (q^2-1)} \\
& & + \frac{q^{(m-1)^2}}{(q^{2m-2}-1) \cdots (q^2-1)}.
\end{eqnarray*} Thus the contribution to $||P_O - \Lambda_{O,z-1,n}||_{TV}$ coming from $\lambda$ with $|\lambda|=2m$
 with $1 \leq m \leq n$ is
\begin{eqnarray*}
& & \frac{1}{4} \sum_{m=1}^n \left[ \frac{q^{m^2}}{(q^{2m}-1) \cdots (q^2-1)} +  \frac{q^{(m-1)^2}}{(q^{2m-2}-1) \cdots (q^2-1)}  \right] \\
& & \cdot \left| \prod_i (1-1/q^{2i-1}) -
\sum_{j=0}^{n-m} \frac{(-1)^j q^j}{(q^{2j}-1) \cdots (q^2-1)} \right|.
\end{eqnarray*} The contribution to $||P_O - \Lambda_{O,z-1,n}||_{TV}$ coming from $|\lambda|=0$ is
\[ \frac{1}{4} \left| \prod_i (1-1/q^{2i-1}) - \sum_{j=0}^{n} \frac{(-1)^j q^j}{(q^{2j}-1) \cdots (q^2-1)} \right|,\] which completes the proof.
\end{proof}

Next we prove the main result of this section.

\begin{theorem} \label{oboundEven} For $n \geq 1$ and $q$ even,
\[ \frac{.1}{q^n} \leq ||P_O - \Lambda_{O,z-1,n}||_{TV} \leq \frac{2.6}{q^n} \]
\end{theorem}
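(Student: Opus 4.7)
The plan is to follow the same template used in Theorems \ref{spbound} and \ref{obound}: extract the lower bound from a single dominant term of Proposition \ref{explicitOeven}, and for the upper bound estimate each of the three blocks (the $m>n$ tail, the $1\leq m\leq n$ main sum, and the isolated $m=0$ contribution) separately.

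For the lower bound, I would look at the $m=n+1$ term in the first sum of Proposition \ref{explicitOeven}, and within it the piece $\frac{q^{(m-1)^2}}{(q^{2m-2}-1)\cdots(q^2-1)}$ at $m=n+1$. This equals $\frac{q^{n^2}}{(q^{2n}-1)\cdots(q^2-1)}$, and the denominator is at most $q^{2+4+\cdots+2n}=q^{n^2+n}$. So the $m=n+1$ contribution dominates by at least
\[ \frac{1}{4}\cdot\frac{1}{q^n}\prod_i(1-1/q^{2i-1}).\]
Since $q\geq 2$, the Euler product $\prod_i(1-1/q^{2i-1})$ is bounded below by an absolute constant (estimated, as in the proof of Theorem \ref{glbound}, using the pentagonal number theorem), which pushes the lower bound above $0.1/q^n$.

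For the upper bound I would handle the three blocks appearing in Proposition \ref{explicitOeven} in turn. In the tail block $m>n$, one has
\[ \frac{q^{m^2}}{(q^{2m}-1)\cdots(q^2-1)}\prod_i(1-1/q^{2i-1})\leq \frac{q^{m^2}}{q^{m^2+m}}=\frac{1}{q^m},\]
and similarly $\frac{q^{(m-1)^2}}{(q^{2m-2}-1)\cdots(q^2-1)}\prod_i(1-1/q^{2i-1})\leq 1/q^{m-1}$, so summing over $m>n$ gives at worst a geometric tail of order $1/q^n$. For the main block $1\leq m\leq n$, I would apply part 1 of Lemma \ref{eulSp} (with $u=1$) to bound
\[ \left|\prod_i(1-1/q^{2i-1})-\sum_{j=0}^{n-m}\frac{(-1)^j q^j}{(q^{2j}-1)\cdots(q^2-1)}\right| \leq \frac{q^{n-m+1}}{(q^{2(n-m+1)}-1)\cdots(q^2-1)},\]
then pull each denominator down to a pure $q$-power at the cost of the universal factor $\prod_j(1-1/q^{2j})^{-1}\leq 1.5$ (exactly as in the proofs of Theorems \ref{spbound} and \ref{obound}). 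The summand becomes, up to this constant, $1/q^m\cdot 1/q^{(n-m+1)^2}$, and the sum over $m$ is again geometric, bounded by a constant times $1/q^n$. The $m=0$ term is analogous and strictly smaller in order.

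The main obstacle is purely arithmetic bookkeeping: I need to keep the multiplicative constants from the Euler-product tails under control so that the sum of the four resulting estimates stays below $2.6/q^n$, and dually that the lower bound's correction factor does not fall below $0.1/q^n$ for every $q\geq 2$ and every $n\geq 1$. Everything else is straightforward geometric summation combined with the identities from Lemmas \ref{eulSp} and \ref{stoOeven}.
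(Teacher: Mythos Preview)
Your proposal is correct and follows essentially the same template as the paper's own proof: the lower bound comes from the $m=n+1$ term of Proposition \ref{explicitOeven} (the piece $\frac{q^{n^2}}{(q^{2n}-1)\cdots(q^2-1)}\prod_i(1-1/q^{2i-1})\geq\prod_i(1-1/q^{2i-1})/q^n$), and the upper bound is obtained by estimating the three blocks separately using the alternating-series tail estimate from Lemma \ref{eulSp} together with the universal constant $\prod_j(1-1/q^{2j})^{-1}\leq 1.5$. The only cosmetic difference is that the paper collapses the bracket $\frac{q^{m^2}}{(q^{2m}-1)\cdots(q^2-1)}+\frac{q^{(m-1)^2}}{(q^{2m-2}-1)\cdots(q^2-1)}$ via the inequality between its two summands (replacing the bracket by twice the second term), whereas you bound the two summands separately; note in your main-block sketch that the dominant piece is of size $1/q^{m-1}$, not $1/q^m$, so the controlling exponent in the geometric sum is $m-1+(n-m+1)^2$, which is what gives the $1/q^n$ rate after summing.
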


\begin{proof} To lower bound the total variation distance, looking at the $m=n+1$ term in Proposition
\ref{explicitOeven} gives that
\begin{eqnarray*}
||P_O - \Lambda_{O,z-1,n}||_{TV} & \geq & \frac{1}{4} \frac{q^{n^2}}{(q^{2n}-1) \cdots (q^2-1)} \prod_i (1-1/q^{2i-1}) \\
& \geq & \frac{1}{4} \frac{q^{n^2}}{q^{n(n+1)}} \prod_i (1-1/q^{2i-1}) \\
& \geq & \frac{.1}{q^n}.
\end{eqnarray*}

Next we consider the upper bound; by Proposition \ref{explicitOeven}, this is a sum of three terms. Since \[ \frac{q^{m^2}}{(q^{2m}-1) \cdots (q^2-1)} \leq \frac{q^{(m-1)^2}}{(q^{2m-2}-1) \cdots (q^2-1)}, \] the first term is at most \begin{eqnarray*}
& & \frac{1}{2} \sum_{m>n} \frac{q^{(m-1)^2}}{(q^{2m-2}-1) \cdots (q^2-1)} \prod_i (1-1/q^{2i-1}) \\
& = & \frac{1}{2} \sum_{m>n} \frac{1}{q^{m-1}} \frac{\prod_i (1-1/q^{2i-1})}{(1-1/q^2) \cdots (1-1/q^{2m-2})} \\
& \leq & \frac{1}{2} \sum_{m>n} \frac{1}{q^{m-1}} \\
& = & \frac{1}{2} \frac{1}{q^n (1-1/q)} \\
& \leq & \frac{1}{q^n}.
\end{eqnarray*}

To upper bound the second term, note that
\begin{eqnarray*}
& &  \frac{1}{4} \sum_{m=1}^n \left[ \frac{q^{m^2}}{(q^{2m}-1) \cdots (q^2-1)} +  \frac{q^{(m-1)^2}}{(q^{2m-2}-1) \cdots (q^2-1)}  \right] \\
& & \cdot \left| \prod_i (1-1/q^{2i-1}) -
\sum_{j=0}^{n-m} \frac{(-1)^j q^j}{(q^{2j}-1) \cdots (q^2-1)} \right| \\
& \leq & \frac{1}{2} \sum_{m=1}^m \frac{q^{(m-1)^2}}{(q^{2m-2}-1) \cdots (q^2-1)} \\
& & \cdot \left| \prod_i (1-1/q^{2i-1}) -
\sum_{j=0}^{n-m} \frac{(-1)^j q^j}{(q^{2j}-1) \cdots (q^2-1)} \right| \\
& \leq & \frac{1}{2} \sum_{m=1}^n \frac{q^{(m-1)^2}}{(q^{2m-2}-1) \cdots (q^2-1)}
\frac{q^{n-m+1}}{(q^{2(n-m+1)}-1) \cdots (q^2-1)}.
\end{eqnarray*} Since $\prod_j (1-1/q^{2j})^{-1} \leq 1.5$, the second term is at most
\[ \frac{(1.5)^2}{2} \sum_{m=1}^n \frac{1}{q^{m-1}} \frac{1}{q^{(n-m+1)^2}} \leq \frac{(1.5)^2}{2} \frac{1}{q^n(1-1/q^2)} \leq \frac{1.5}{q^n}.\]

To upper bound the third term, note that
\[ \frac{1}{4} \left| \prod_i (1-1/q^{2i-1}) -
\sum_{j=0}^{n} \frac{(-1)^j q^j}{(q^{2j}-1) \cdots (q^2-1)} \right| \leq \frac{1}{4} \frac{q^{n+1}}{(q^{2(n+1)}-1) \cdots (q^2-1)}\] Since $\prod_j (1-1/q^{2j})^{-1} \leq 1.5$, the third term is at most \[ \frac{1.5}{4} \frac{1}{q^{(n+1)^2}} \leq \frac{1.5}{32} \frac{1}{q^n} \leq \frac{.05}{q^n}.\]

Adding the upper bounds on the three terms completes the proof.
\end{proof}

\end{document}